\newcounter{citedtheorems}
\newtheorem{defn}{Definition}[section]
\newtheorem{theorem}[defn]{Theorem}
\newtheorem*{th-ref}{Theorem} 
\newtheorem{thm-lit}[citedtheorems]{Theorem} 
\newtheorem{cor-lit}[citedtheorems]{Corollary}
\newtheorem{cor}[defn]{Corollary}
\newtheorem{propn}[defn]{Proposition}
\newtheorem{concl}[defn]{Conclusion}
\newtheorem{conv}[defn]{Convention}
\newtheorem{claim}[defn]{Claim}
\newtheorem{lemma}[defn]{Lemma}
\newtheorem{obs}[defn]{Observation}
\newtheorem{rmk}[defn]{Remark}
\newtheorem{expl}[defn]{Example}
\newcommand{\br}{\vspace{2mm}}
\newcommand{\cols}{\operatorname{Col}}
\newcommand{\vrt}{\rule{0pt}{12pt}}
\newcommand{\svrt}{\rule{0pt}{10pt}}
\newcommand{\fss}{{\mathcal{P}}_{\aleph_0}}
\newcommand{\vp}{\varphi}
\newcommand{\ds}{-}
\def\dsp{\def\baselinestretch{1.0}\large\normalsize}
\def\ssp{\def\baselinestretch{1.0}\large\normalsize}
\begin{document}

\title{Edge distribution and density \\ in the characteristic sequence}
\author{M. E. Malliaris}
\address{Department of Mathematics, University of Chicago, 5734 S. University Avenue, Chicago, IL 60637}
\email{mem@math.uchicago.edu}

\begin{abstract}
The characteristic sequence of hypergraphs $\langle P_n : n<\omega \rangle$ associated to a formula $\vp(x;y)$,
introduced in \cite{mm-article-2}, is defined by $P_n(y_1,\dots y_n) = (\exists x) \bigwedge_{i\leq n} \vp(x;y_i)$. 
This paper continues the study of characteristic sequences,  
showing that graph-theoretic techniques, notably Szemer\'edi's celebrated regularity lemma, can
be naturally applied to the study of model-theoretic complexity via the characteristic sequence. 
Specifically, we relate classification-theoretic properties of $\vp$ and of the $P_n$ (considered as formulas) to
density between components in Szemer\'edi-regular decompositions of graphs in the characteristic sequence. In addition, we 
use Szemer\'edi regularity to calibrate model-theoretic notions of independence by describing the 
depth of independence of a constellation of sets and showing that certain failures of depth imply 
Shelah's strong order property $SOP_3$; this sheds light on the interplay of independence
and order in unstable theories. 
\end{abstract}

\maketitle

\section{Introduction}

The characteristic sequence $\langle P_n : n<\omega \rangle$ is a tool for studying the combinatorial complexity of a given formula $\vp$, 
Definition \ref{characteristic-sequence} below. It follows from \cite{mm-article-1}, \cite{mm-article-2} that 
the Keisler order \cite{keisler} localizes to the study of $\vp$-types and specifically of characteristic sequences. 
However, this article will not focus on ultrapowers.

The analysis of \cite{mm-article-2} established that characteristic sequences are essentially trivial when
the ambient theory $T$ is $NIP$, Theorem \ref{char-stable} below. In this article, we turn to the study of
characteristic sequences in the presence of the independence property. 
The framework of characteristic sequences allows us to bring a deep collection of graph-theoretic
structure theorems to bear on our investigations. 
Notably, the classic model-theoretic move of polarizing complex structure into rigid and random components
(e.g. Shelah's isolation of the independence property and the strict order property in unstable theories)
is accomplished here by the application of Szemer\'edi's Regularity Lemma, \S \ref{section:regularity} Theorem \ref{szr} below.
Because the Regularity Lemma describes a possible decomposition of \emph{any} sufficiently large graph, 
it can be applied here to understand how arbitrarily large subsets of $P_1$ generically interrelate. 

In Sections \ref{section:counting}-\ref{section:order-genericity},
we investigate how classic properties of $T$ affect the density $\delta$ attained 
between arbitrarily large $\epsilon$-regular subsets $A,B \subset P_1$ (after localization) in the sense of Szemer\'edi regularity,
where the edge relation is given by $P_2$.
The picture we obtain is as follows. When $\vp$ is stable, by Theorem \ref{char-stable}, the density (after localization) is always 1. 
When $\vp$ is simple unstable, after localization, there will be an infinite number of missing edges
but we can say something strong about their distribution: $(*)$ the density between arbitrarily large $\epsilon$-regular pairs must tend
towards $0$ or $1$ as the graphs grow (indeed, here simplicity is sufficient but not necessary). In the simple unstable case, 
a finer function counting the number of edges omitted over finite subgraphs of size $n$
is meaningful, and we give a preliminary description of its possible values in Proposition \ref{counting-thm}. 
In Section \ref{section:order-genericity}, we use model theory to relate the property $(*)$ of having arbitrarily large $\epsilon$-regular subsets 
of $P_1$ with edge density bounded away from 0 and 1 to the phenomenon of instability in the characteristic sequence, which
is strictly more complex than failure of simplicity.
In Section \ref{section:two-order-properties} we refine this phenomenon by 
defining and investigating the compatible and empty order properties. 
On the level of theories, the compatible order property characterizes the model-theoretic rigidity property $SOP_3$,
which is known to imply maximality in the Keisler order by \cite{Sh500}.

In the other direction, in Section \ref{section:randomness} we use Szemer\'edi regularity 
to bring to light a subtle model-theoretic failure of randomness,
by considering the ``depth of independence'' of a constellation of infinite sets. 
In the language of Definition \ref{constellations}, 
we show that theories which are $I^{n+1}_n$ but not $I^{n+1}_{n+1}$ for some $n>2$, are $SOP_3$.
This is a result about the fine structure of the classic SOP/IP distinction,
illustrating the tradeoff between a weaker notion of strict order ($SOP_3$) 
and a stronger notion of independence ($I^{n+1}_{n+1}$) in unstable theories.

\br

\subsection*{Acknowledgements} 
Thanks are due to my advisor Thomas Scanlon, and to Leo Harrington, for many stimulating conversations, as well as to
Scanlon's NSF grant for funding a trip to the ICM in Madrid where I first learned of Szemer\'edi's work. 
Thanks also to Laci Babai for a copy of the helpful survey \cite{rsurvey}.

\section{Preliminaries} \label{section:intro}

The following conventions will be in place throughout the article.

\begin{conv} \label{notation} \emph{(Conventions)}
\begin{enumerate}
\item If a variable or a tuple is written $x$ or $a$ rather than $\overline{x}, \overline{a}$, this does not necessarily
imply that $\ell(x), \ell(a)=1$. 
\item Unless otherwise stated, $T$ is a complete theory in the language $\mathcal{L}$.
\item A set is \emph{$k$-consistent} if every $k$-element subset is consistent, and it is \emph{$k$-inconsistent} if every $k$-element
subset is inconsistent. 
\item $\vp_\ell(x;y_1,\dots y_\ell) := \bigwedge_{i\leq l} \vp(x;y_i)$
\item $S_{\aleph_0}(\omega)$ is the set of all finite subsets of $\omega$.
\item $\epsilon, \delta$ are real numbers, with $0 < \epsilon < 1$ and $0 \leq \delta \leq 1$.
\item Let $G$ be a symmetric binary graph. We present graphs model-theoretically, i.e. as sets of vertices
on which certain edge relations hold. Throughout this article $R(x,y)$ is a binary edge relation,
which will sometimes (we will clearly say when) be interpreted as $P_2$.
\item A graph is a simple graph: no loops and no multiple edges. Definition \ref{characteristic-sequence} below implies that
$\forall x (P_1(x) \rightarrow P_2(x,x))$, but
we will, by convention, not count loops when taking $P_2$ as $R$.  
\item Given a graph $G$, with symmetric binary edge relation $R(x,y)$:
\begin{itemize} 
\item $|G|$ is the size of $G$, i.e. the number of vertices.
\item $e(G)$ is the number of edges of $G$.
\item $\hat{e}(G)$ is the number of edges omitted in $G$.
\item An \emph{empty graph} is a graph with no edges.
\item A \emph{complete graph} is a graph with all edges, i.e. in which $x,y \in G, x\neq y \implies R(x,y)$.
\item The \emph{degree} of a vertex is the number of edges which contain it. 
\item The \emph{dual graph} $G^\prime$ has the same vertices and inverted edges, i.e. 
for $x \neq y$, $G^\prime \models R(x,y)$ $\iff$ $G \models \neg R(x,y)$.  
\end{itemize}
\item Write $(X,Y)$ to indicate a a bipartite graph. Then:
\begin{itemize} 
\item $e(X,Y)$ is the number of edges between elements $x \in X$ and $y \in Y$. Note that if $G = A \cup B$
then possibly $e(G) \neq e(A,B)$, as the latter counts only edges between $A$ and $B$.
\item $\hat{e}(X,Y)$ is the number of edges omitted between elements $x \in X$ and $y \in Y$.
\item The \emph{density} of a finite bipartite graph $(X,Y)$ is $\delta(X,Y) := e(X,Y)/|X||Y|$
when $|X|, |Y| \neq 0$, and $0$ otherwise. 
\item An \emph{empty pair} is a pair of vertices $x,y$ with $\neg R(x,y)$.
\item An \emph{infinite empty pair} is $(X,Y)$ such that $|X| = |Y| \geq \aleph_0$ and for all
$x \in X$, $y \in Y$, we have $\neg R(x,y)$. 
\item A \emph{complete bipartite graph} is $(X,Y)$ such that for all $x \in X, y \in Y$, $R(x,y)$.
\item The dual $(X,Y)^\prime$ of a bipartite graph inverts precisely the edges between the components $X$ and $Y$.
\end{itemize}
\end{enumerate}
\end{conv} 

We will make extensive use of the important classification-theoretic dividing lines of stability,
simplicity, the independence property, and the strict order property; see, for instance, 
\cite{Sh:c}, Chapter II, sections 2-4 and \cite{Sh500}. A theory or a formula is NIP, also called dependent,
if it does not have the independence property; see, for instance, \cite{Usv}.

We now turn to definitions.
The characteristic sequence of hypergraphs was introduced in \cite{mm-article-2} as a tool for studying the complexity of a given formula $\vp$.
Let us set the stage by briefly reviewing some of the results obtained there.

\begin{defn} \emph{(Characteristic sequences)} \label{characteristic-sequence} 
Let $T$ be a first-order theory and $\vp$ a formula of the language of $T$. 
\begin{itemize}
\item For $n<\omega$, $P_n(z_1,\dots z_n) :=  \exists x \bigwedge_{i\leq n} \vp(x;z_i) $.
\item The \emph{characteristic sequence} of $\vp$ in $T$ is $\langle P_n : n<\omega \rangle$.
\item Write $(T,\vp) \mapsto \langle P_n \rangle$ for this association. 
\item We assume that $T \vdash \forall y \exists z \forall x (\vp(x;z) \leftrightarrow \neg \vp(x;y))$. If this does not already
hold for some given $\vp$, replace $\vp$ with $\theta(x;y,z) = \vp(x;y) \land \neg\vp(x;z)$. 
\end{itemize}
\end{defn}

\begin{conv} \label{localization-depends-on-T}
\emph{As the characteristic sequence is definable in $T$, its first-order properties
depend only on the theory and not on the model of $T$ chosen. Throughout this paper,
we will be interested in whether certain, possibly infinite, configurations appear 
as subgraphs of the $P_n$. By this we will always mean \emph{whether or not it is consistent with} $T$
that such a configuration exists when $P_n$ is interpreted in some sufficiently saturated model.
Thus, without loss of generality the formulas $P_n$ will often be identified 
with their interpretations in some monster model.} 
\end{conv}

Characteristic sequences give a natural context for studying the complexity of $\vp$-types, which
correspond in this case to complete graphs.

\begin{defn} \label{small-defn} Fix $T, \vp$, $M \models T$ and $(T, \vp) \mapsto \langle P_n \rangle$. 
\begin{enumerate}
\item A \emph{positive base set} is a set $A \subset P_1$ such that $A^n \subset P_n$ for all $n<\omega$.
\item The sequence $\langle P_n \rangle$ has \emph{support k} if: $P_n(y_1,\dots y_n)$
iff $P_k$ holds on every $k$-element subset of $\{ y_1,\dots y_n \}$. The sequence has
\emph{finite support} if it has support $k$ for some $k<\omega$.
\item The elements $a_1,\dots a_k \in P_1$ are a \emph{k-point extension} of the $P_\infty$-complete graph $A$ just in case $Aa_1,\dots a_k$ is also 
a $P_\infty$-complete graph.
\end{enumerate}
\end{defn}

\begin{obs} \label{pbs}
Fix $T, \vp$ and $M \models T$ and suppose $(T, \vp) \mapsto \langle P_n \rangle$.
\begin{enumerate} 
\item The following are equivalent, for a set $A \subset M$:
\begin{enumerate}
\item $A$ is a positive base set.
\item The set $\{ \vp(x;a) : a \in A \}$ is consistent.
\end{enumerate}
\item The following are equivalent, for a set $A \subset P_1$:
\begin{enumerate}
\item $A^n \cap P_n = \emptyset$ for some $n$.
\item $\{ \vp(x;a) : a \in A \}$ is $1$-consistent but $n$-inconsistent (Convention \ref{notation}(2)).
\end{enumerate}
Note that if $A$ is infinite, compactness then implies some instance of $\vp$ divides.
\item The following are equivalent:
\begin{enumerate}
\item $\langle P_n \rangle$ has finite support.
\item $\vp$ does not have the finite cover property. 
\end{enumerate}
\end{enumerate}
\end{obs}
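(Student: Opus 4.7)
The plan is to verify each of the three equivalences by unpacking Definitions \ref{characteristic-sequence} and \ref{small-defn}, invoking compactness only at the two places where one must pass between finite and infinite collections of formulas. I would handle the parts in the stated order.

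For (1), the definition of $P_n$ gives directly that $(a_1,\dots,a_n) \in P_n$ iff $\exists x \bigwedge_{i\leq n} \vp(x;a_i)$. So the assertion $A^n \subset P_n$ for every $n<\omega$ is precisely the statement that every finite subset of $\{\vp(x;a) : a \in A\}$ is satisfiable in the monster model; by compactness this is equivalent to consistency of the partial type. The reverse direction is the same observation read backward.

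For (2), the hypothesis $A \subset P_1$ unpacks, via the definition of $P_1$, to $1$-consistency of $\{\vp(x;a) : a \in A\}$ in the sense of Convention \ref{notation}(3). The condition $A^n \cap P_n = \emptyset$ says no $n$-tuple from $A$ witnesses $\exists x \bigwedge_{i\leq n} \vp(x;a_i)$, which is exactly $n$-inconsistency. The converse reads the same equivalences in the opposite direction. The follow-up remark that some instance of $\vp$ divides when $A$ is infinite is then a standard extraction: apply Ramsey to $A$ (infinite) to pull out an order-indiscernible sequence of parameters on which $\vp(x;-)$ remains $1$-consistent and $n$-inconsistent, and interpret this as dividing.

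For (3), unwinding the definition of support, ``$\langle P_n \rangle$ has support $k$'' says that for all $n \geq k$ and all $y_1, \dots, y_n$, the collection $\{\vp(x;y_i) : i \leq n\}$ is consistent iff each of its $k$-element subcollections is consistent. Hence $\langle P_n \rangle$ has finite support iff there exists $k$ such that $k$-consistency of any finite collection of $\vp$-instances forces consistency. By compactness (used to extend from finite to arbitrary collections of $\vp$-instances) this is equivalent to the failure of the finite cover property for $\vp$.

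None of the three parts presents a real obstacle: each is essentially a bookkeeping exercise once the definitions are lined up. The two substantive points that need care are the compactness steps in (1) and (3), where one must be precise about when finite satisfiability suffices; and, for the parenthetical remark in (2), the extraction of an indiscernible dividing sequence, which is routine but should be flagged so the reader sees where Ramsey and compactness enter.
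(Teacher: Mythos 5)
Your proof is correct and matches what the paper intends: the observation is stated without proof precisely because each of the three parts is a direct unwinding of Definitions \ref{characteristic-sequence} and \ref{small-defn}, with compactness passing between finite and arbitrary collections of $\vp$-instances exactly where you invoke it. The only step going beyond bookkeeping is the dividing remark in (2), and your Ramsey extraction of an indiscernible sequence preserving $1$-consistency and $n$-inconsistency is the standard way to obtain it.
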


Localization is a definable restriction of the predicates $P_n$ of a certain useful form which eliminates some of the combinatorial noise
around a positive base set $A$ under analysis. Definability ensures that
Convention \ref{localization-depends-on-T} applies when asking whether certain configurations are present in some localization. 

\begin{defn} \emph{(Localization, Definition 5.1 of \cite{mm-article-2})}
Fix a characteristic sequence $(T, \vp) \rightarrow \langle P_n \rangle$, and choose $B, A \subset M \models T$ with $A$ 
a positive base set, possibly empty.
A \emph{localization} $P^f_n$ of the predicate $P_n(y_1,\dots y_n)$ around the positive base set $A$ with parameters from $B$ is given by
a finite sequence of triples $f: m \rightarrow \omega \times \fss(y_1,\dots y_n) \times \fss(B)$
where $m<\omega$ and:
\begin{itemize}
\item writing $f(i) = (r_i, \sigma_i, \beta_i)$ and $\check{s}$ for the elements of the set $s$, we have:
\[  P^f_n(y_1,\dots y_n) :=  \bigwedge_{i\leq m} ~~ P_{r_i}(\check{\sigma_i}, \check{\beta_i})       \]
\item for each $\ell < \omega$, $T_1$ implies that there exists a $P_\ell$-complete graph $C_\ell$ such that 
$P^f_n$ holds on all $n$-tuples from $C_\ell$. 
If this last condition does not hold, $P^f_n$ is a \emph{trivial localization}. By \emph{localization} we will always mean
non-trivial localization.
\item In any model of $T_1$ containing $A$ and $B$, $P^f_n$ holds on all $n$-tuples from $A$. 
\end{itemize}
\end{defn}

For the purposes of this article, we will indicate where localization is useful without, generally, specifying the parameters
or the form involved, writing simply ``there exists a localization in which...'' or ``after localization...'' for short.
Because this may always be taken to include a fixed positive base set, the essential complexity of the type under analysis
is not lost.
Localization reveals a gap in the classification-theoretic complexity of $\vp$ and of $P_2$. 
\S \ref{section:order-genericity} below will shed light on this result: 

\begin{concl} \emph{(Conclusion 5.10 of \cite{mm-article-2})}  \label{ps-stable}
Suppose $T$ is simple, $(T, \vp) \mapsto \langle P_n \rangle$. Then for any $n<\omega$, 
and any partition of $y_1, \dots y_n$ into object and parameter variables, 
after localization the formulas $P_2(y_1, y_2), \dots P_n(y_1,\dots y_n)$ do not have the order property.
\end{concl}

It turns out that when $\vp$ is NIP one can always localize (without losing sight of the positive base set $A$ under analysis) 
so that any given finite initial segment of the characteristic sequence is a complete graph. 
In other words, the characteristic sequence is non-trivial in the presence of the independence property. 

\begin{theorem} \emph{(Theorem 6.17 of \cite{mm-article-2})} \label{char-stable} 
Let $\vp$ be a formula of $T$ and $\langle P_n \rangle$ its characteristic sequence. 
\begin{enumerate}
\item If $\vp$ is NIP, 
then for each positive base set $A \subset P_1$ and for each $n<\omega$,
there exists a localization $P^{f_n}_1 \supset A$ of $P_1$ which is a $P_n$-complete graph, 
i.e. $\{ y_1,\dots y_n \} \subset P^{f_n}_1 \rightarrow P_n(y_1,\dots y_n)$. 
\item If $\vp$ has IP, then for all $n<\omega$, $P_1$ contains a $P_n$-empty tuple.  
\end{enumerate}
\end{theorem}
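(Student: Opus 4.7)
My plan is to treat the two parts separately. Part (2) reduces to a direct application of IP together with the complement-closure assumption built into Definition \ref{characteristic-sequence}, while part (1) is the substantive direction, requiring NIP via a compactness/definability argument.

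For part (2), since $\vp$ has IP, fix an infinite $\vp$-shattered sequence $\langle b_i : i < \omega\rangle$ of distinct elements with witnesses $c_S$ ($S \subseteq \omega$) satisfying $\models \vp(c_S;b_i) \Leftrightarrow i \in S$. Each $b_i$ lies in $P_1$ (witnessed by $c_{\{i\}}$). By the complement-closure assumption, pick $b^*_0$ with $\vp(x;b^*_0) \leftrightarrow \neg\vp(x;b_0)$; then $b^*_0 \in P_1$ (witnessed by $c_\emptyset$), and $b^*_0 \ne b_i$ for all $i$ because $c_\emptyset$ separates $\vp(x;b^*_0)$ from each $\vp(x;b_i)$. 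For $n \ge 2$ the $n$-tuple $(b_0, b^*_0, b_1, \dots, b_{n-2})$ thus consists of distinct elements of $P_1$ and is $P_n$-empty, as any common realizer would simultaneously satisfy $\vp(x;b_0)$ and $\neg\vp(x;b_0)$.

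For part (1), fix $A$ a positive base set and $n < \omega$. Choose $c$ realizing $\{\vp(x;a) : a \in A\}$ in a saturated model. The externally defined set $S := \{y : \vp(c;y)\}$ contains $A$ and is $P_k$-complete for every $k$, since $c$ is a common realizer of every finite subset. The task is to approximate $S$ by a localization $P^{f_n}_1(y) = \bigwedge_i P_{r_i}(y, \overline{\beta_i})$ that still contains $A$ and remains $P_n$-complete. The natural ansatz is, for a finite $A_0 \subseteq A$, the single-conjunct localization $P_{|A_0|+1}(y, \overline{A_0})$ asserting that $y$ has a common $\vp$-realizer with $A_0$; this contains $A$ because $A$ is a positive base set. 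One then shows that $A_0$ (together with possibly further conjuncts and parameters) can be chosen so that any $n$ such $y$'s have a common realizer.

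If no finite $A_0$ suffices, then compactness produces, for every finite $A_0 \subseteq A$, an $n$-tuple $\overline{y}^{A_0}$ that is $P_n$-empty but each component of which is $\vp$-compatible with $A_0$. Running this over a cofinal family of finite subsets of $A$ and applying Ramsey, one extracts an $A$-indiscernible sequence of $n$-tuples with the same bad behaviour. Comparing with the witness $c$---which is $\vp$-compatible with every component of every $\overline{y}^{A_0}$ but not simultaneously with all $n$---one should obtain alternation of $\vp$ encoding a shattered sequence, contradicting NIP. The main obstacle is precisely this extraction: converting ``compatibility with all finite subsets of $A$ but failure of a joint realizer'' into a genuine IP configuration, and then into parameters for a $P$-formula. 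A cleaner alternative is to invoke Shelah's definability of $\vp$-types in NIP theories to find a formula $\theta(y)$ over $A$ equivalent to $\vp(c;y)$ on the relevant parameter set, and to verify that $\theta$ can be rewritten as a conjunction of $P_r$-predicates with parameters; I suspect this translation to the $P$-language is where the real combinatorial content lies.
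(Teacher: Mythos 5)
The statement is Theorem 6.17 of \cite{mm-article-2}; the present paper cites it without proof, so I can only evaluate your argument on its own terms, and I find a substantive gap in part (1) and a subtler problem in part (2). For part (2), the tuple $(b_0, b^*_0, b_1, \dots, b_{n-2})$ does fail $P_n$, but your argument is essentially independent of IP: the complement-closure clause of Definition \ref{characteristic-sequence} already produces such a pair for any $b_0 \in P_1$ whose $\vp$-fiber is not everything, whether $\vp$ has IP or not, so this construction works equally well for a stable $\vp$. Since the theorem is framed as a dichotomy with part (1), and the surrounding prose glosses (2) as showing ``the characteristic sequence is non-trivial in the presence of the independence property,'' the natural reading is that for IP $\vp$ the $P_n$-empty tuples persist under localization; but your one fixed pair $(b_0, b^*_0)$ says nothing about that, since a localization is a definable restriction of $P_1$ that can simply exclude $b_0$. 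The IP-specific structure you should expect to have to use is the $(\omega, 2)$-array characterization of IP quoted in Section \ref{section:counting}.

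Part (1) is the real content and you acknowledge the gap; let me sharpen why neither sketched route closes it. The definability route rests on a false premise: definability of $\vp$-types over models characterizes \emph{stability} of $\vp$, not NIP. In NIP one has at best honest definitions for externally definable sets, a strictly weaker and technically different tool; and even granting an honest definition $\theta(y)$ approximating $\vp(c;y)$ on the relevant parameter set, rewriting $\theta$ as a nontrivial finite conjunction of predicates $P_r(y,\bar\beta)$ that contains $A$ and is $P_n$-complete is not a routine translation but exactly the combinatorial heart the proof would need. The compactness/Ramsey route has the complementary problem: from ``every finite $A_0 \subset A$ admits a $P_n$-empty tuple whose components are $P_{|A_0|+1}$-compatible with $A_0$'' one must extract a binary $\vp$-alternation pattern witnessing IP, and you give no mechanism by which the $n$-ary inconsistency of a $P_n$-empty tuple collapses to binary alternation. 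As written, neither sketch supplies the argument.
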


Furthermore, when $\vp$ is simple unstable we may assume that after localization, in any given finite initial segment of
the characteristic sequence, there are uniform finite bounds on the size of empty graphs. 

\begin{theorem} \emph{(Theorem 6.24 of \cite{mm-article-2})}  \label{char-simple} 
Let $\vp$ be a formula of $T$ and $\langle P_n \rangle$ its characteristic sequence. 

\begin{enumerate}
\item If $\vp$ is simple, then for each $P_\infty$-graph $A \subset P_1$ and for each $n<\omega$,
there exists a localization $P^{f_n}_1 \supset A$ of $P_1$ in which there is a uniform finite bound on the size of
a $P_n$-empty graph, i.e. there exists $m_n$ such that $X \subset P_1$ and $X^n \cap P_n = \emptyset$ implies $|X| \leq m_n$.
\item If $\vp$ is not simple, then for all but finitely many $r<\omega$, $P_1$
contains an infinite $(r+1)$-empty graph.  
\end{enumerate}
\end{theorem}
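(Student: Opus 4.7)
The plan is to treat the two parts separately, and in both cases to match configurations of infinite $(r+1)$-empty subgraphs of $P_1$ with the presence or absence of the tree property for $\vp$.

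Part (2) is the easier direction. If $\vp$ is not simple then $\vp$ has the tree property, so there exist an integer $k \geq 2$ and a tree $\langle a_\eta : \eta \in \omega^{<\omega} \rangle$ of parameters such that every branch gives a consistent set of instances of $\vp$ while, at every node, the immediate successors yield a $k$-inconsistent family. Restricting to the successors of the root furnishes an infinite set $X = \{a_{\langle i\rangle} : i < \omega\} \subset P_1$ on which the instances of $\vp$ are $k$-inconsistent. Because a $k$-inconsistent set is automatically $m$-inconsistent for every $m \geq k$, the set $X$ is an infinite $(r+1)$-empty graph for every $r \geq k - 1$, giving the conclusion with the only exceptions being $r < k-1$.

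Part (1) I would prove by contradiction, using that the family of localizations around a given positive base set $A$ is directed under conjunction. If no single localization suffices to bound the size of $P_n$-empty subgraphs, then for each finite conjunction $P^f_1$ there is, in any sufficiently saturated model, an arbitrarily large $P_n$-empty subgraph inside $P^f_1$. A compactness argument then produces, in a monster model, a single infinite $X \subset P_1$ with $X^n \cap P_n = \emptyset$ whose elements simultaneously satisfy every localization constraint around $A$; in particular every element of $X$ is $P_\infty$-compatible with $A$. Extracting an $A$-indiscernible subsequence from $X$ by Ramsey shows that the associated instances $\{\vp(x; a_i) : i < \omega\}$ form an $n$-inconsistent indiscernible sequence, so $\vp(x; a_0)$ divides over $A$.

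To finish, I would iterate: the freshly produced dividing instance can be used to enlarge $A$ and re-apply the compactness step, producing at each stage a new branch of an ever-deeper tree of $n$-inconsistent dividing parameters. The existence of such an unbounded nested pattern then contradicts the finiteness of the dividing rank $D(x = x, \vp, n)$ guaranteed by simplicity of $\vp$. The main obstacle, and the step I expect to require the most care, is this iteration: one must ensure that each new stage really does extend the previous tree rather than reproducing dividing at an already-charted location, which requires coordinating the localizations and the indiscernibility extraction so that the parameters of successive stages witness incomparable siblings of a genuinely growing tree. Once this coordination is set up, the contradiction with finite $D$-rank, hence with simplicity, is immediate, and reading this contradiction backwards yields the desired finite bound $m_n$ inside a suitable localization $P^{f_n}_1 \supset A$.
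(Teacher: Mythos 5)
This theorem is quoted here from \cite{mm-article-2} (Theorem 6.24), so this paper itself does not contain the proof; I can only judge your argument on its own terms.

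Part (2) is fine. The tree property for $\vp$ gives an integer $k \geq 2$ and a tree of parameters whose branches yield consistent $\vp$-types and whose siblings are $k$-inconsistent. The root-level siblings $\{a_{\langle i\rangle} : i<\omega\}$ lie in $P_1$ (each begins a consistent branch) and form a $P_k$-empty graph since any $k$ of them are $\vp$-inconsistent; monotonicity of inconsistency makes this a $P_{r+1}$-empty graph for every $r+1 \geq k$, so only finitely many $r$ are excluded. One small point worth stating explicitly: since the paper reads ``$X^n \cap P_n = \emptyset$'' as ranging over distinct $n$-tuples (Observation \ref{pbs}(2) and the convention against loops), $k$-inconsistency of the sibling set really is the same thing as being $P_k$-empty; otherwise repeated tuples would trivially satisfy $P_k$ via $P_1$.

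For Part (1), your plan is correct, but the difficulty you flag is not the real one, and I think you are worrying about the wrong thing. The concern you raise --- ``ensuring each new stage really does extend the previous tree rather than reproducing dividing at an already-charted location'' --- dissolves once you make one observation explicit: a localization around $A' := A\cup\{a_0\}$ is automatically a localization around $A$, because the condition ``holds on all tuples from the base set'' only becomes stronger as the base set grows, while the non-triviality condition is independent of the base set. Since $a_0$ satisfies every localization constraint around $A$, the set $A'$ is still a $P_\infty$-complete graph, and if some localization around $A'$ bounded $P_n$-empty subgraphs it would already be a localization around $A$ doing the same --- contradicting the standing hypothesis. So the hypothesis ``no localization bounds $P_n$-empty subgraphs'' propagates automatically from $A$ to $A'$, and the iteration needs no special coordination: at stage $k$ you simply re-run the compactness step over $A_k := A\cup\{a_0,\dots,a_{k-1}\}$, obtaining an $A_k$-indiscernible $P_n$-empty sequence starting with some $a_k$ compatible with $A_k$. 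This produces an increasing chain of $\vp$-types in which $\vp(x;a_k)$ $n$-divides over $\operatorname{dom}$ of the previous stage, which is exactly what is needed to drive $D(\cdot,\vp,n)$ to infinity; you do not need to build an actual tree to reach the contradiction. Two supporting facts you invoke should be checked against \cite{mm-article-2}: that the localizations around a fixed positive base set are directed under conjunction, and that when $A$ is infinite the indiscernible sequence should be obtained directly by the standard compactness argument rather than by Ramsey from a countable $X$. With those in hand your sketch closes.
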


The stage is now set as follows. The characteristic sequence of hypergraphs are a sequence of incidence relations
defined on the parameter space of a formula $\vp$. Positive base sets correspond naturally
(though not necessarily uniquely) to base sets for $\vp$-types. 
We turn to the study of the generic interrelationships between sets generally, and positive base sets particularly, in the parameter space of 
a given $\vp$. Theorem \ref{char-stable} strongly focuses our attention on the ``wild'' case of theories
with the independence property and Theorem \ref{char-simple} suggests simple unstable theories as a first object of study.

\section{Counting functions on simple $\vp$} \label{section:counting}

\noindent Throughout this section, we consider the binary edge relation $P_2$ from the characteristic sequence of $\vp$. The notation and
vocabulary follow Convention \ref{notation}. 

\begin{obs}
Suppose $\vp$ is stable. Then after localization, for any two disjoint finite $X, Y \subset P_1$,
$\delta(X,Y)$ = 1. On the other hand, if $\vp$ is simple unstable then $P_1$ contains an empty pair. 
\end{obs}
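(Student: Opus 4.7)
The plan is to read off both halves of the observation directly from the $n = 2$ cases of Theorem \ref{char-stable}.

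For the stable clause, I would begin by noting that stability implies NIP, so Theorem \ref{char-stable}(1) applies with $n = 2$: given any positive base set $A$ (possibly empty), there is a localization $P^{f_2}_1 \supset A$ which is a $P_2$-complete graph. Inside this single fixed localization, every pair of distinct elements satisfies $P_2$, so for any disjoint finite $X, Y \subset P^{f_2}_1$ one has $e(X,Y) = |X| |Y|$ and hence $\delta(X,Y) = 1$. The desired uniformity over $X$ and $Y$ is automatic, because $P_2$-completeness is a property of $P^{f_2}_1$ itself and not of any particular finite subsets chosen inside it.

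For the simple unstable clause, I would invoke the classical dichotomy that an unstable formula must have either IP or SOP, together with the fact that simplicity rules out the tree property and \emph{a fortiori} SOP; consequently a simple unstable formula has the independence property. Theorem \ref{char-stable}(2) applied at $n = 2$ then produces a $P_2$-empty tuple $(a,b)$ in $P_1$, which, recalling Convention \ref{notation}(10) and that loops are excluded, is precisely an empty pair in the graph $(P_1, P_2)$.

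There is no substantial obstacle: the observation is essentially a repackaging of the $n = 2$ cases of Theorem \ref{char-stable}. The only point I would want to verify carefully is the terminological match between "$P_2$-empty tuple" in the cited theorem and "empty pair" in the conventions, which is immediate.
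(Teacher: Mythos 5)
Your proposal is correct and matches the paper's own proof: both clauses are read off from Theorem \ref{char-stable} with $n=2$, the stable case giving a $P_2$-complete localization and the simple unstable case giving a $P_2$-empty pair. The extra remarks you supply (stable $\Rightarrow$ NIP, simple unstable $\Rightarrow$ IP) are the same standard facts the paper leaves implicit.
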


\begin{proof}
Theorem \ref{char-stable}(1) says that when $\vp$ is stable, after localization $P_1$ is a complete graph, so a fortiori
there are no edges omitted between disjoint components. The second clause is Theorem \ref{char-stable}(2).
\end{proof}

\begin{defn}
Define $\alpha: \omega \rightarrow \omega$ to be
\[    \operatorname{max}~~ \{ \hat{e}(X) : X \subset P_1, |X| = n\} \]
i.e. the largest number of $P_2$-edges omitted over an $n$-size subset of $P_1$.
\end{defn}

\begin{obs} \label{simple-not-maxl}
Suppose $\vp$ is simple, i.e., $\vp$ does not have the tree property. 
Then after localization $\alpha(n) < \frac{n(n-1)}{2}$.
\end{obs}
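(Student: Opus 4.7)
The plan is to reduce the inequality directly to Theorem \ref{char-simple}(1) by reinterpreting the edge count. For any subset $X \subset P_1$ of size $n$, the trivial upper bound on $\hat{e}(X)$ is $n(n-1)/2$, and this bound is attained exactly when $X$ omits every possible $P_2$-edge, that is, when $X$ is a $P_2$-empty graph in the sense of Theorem \ref{char-simple}. So the statement $\alpha(n) < n(n-1)/2$ is equivalent to the assertion that no $n$-vertex subset of $P_1$ is $P_2$-empty.

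With this reformulation in hand, I invoke Theorem \ref{char-simple}(1) applied with its own parameter $n$ set to $2$. Since $\vp$ lacks the tree property, the theorem yields a localization of $P_1$ together with a uniform finite bound $m_2$ on the size of any $P_2$-empty graph inside the localized $P_1$. Consequently, for every $n > m_2$, there is no $n$-element $P_2$-empty subset, so every $X \subset P_1$ with $|X| = n$ must contain at least one $P_2$-edge; this single edge forces $\hat{e}(X) \leq n(n-1)/2 - 1$, and taking the maximum over such $X$ gives $\alpha(n) < n(n-1)/2$ in the chosen localization.

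All the real content of the observation is packaged into Theorem \ref{char-simple}(1): once that bound on $P_2$-empty subgraphs is available, the inequality for $\alpha$ is a tautological consequence, and there is no genuine obstacle to overcome beyond the translation between the language of omitted edges and the language of empty subgraphs. In particular, no further use of simplicity or of the tree property is required beyond the single invocation of the cited theorem.
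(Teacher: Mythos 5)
Your argument is correct and matches the paper's own proof essentially verbatim: both observe that the trivial bound $n(n-1)/2$ is attained precisely on a $P_2$-empty graph, both invoke Theorem~\ref{char-simple}(1) to get a localization with a uniform finite bound on $P_2$-empty subgraphs, and both conclude that $\alpha(n)$ falls strictly below the maximum once $n$ exceeds that bound. The only stylistic difference is that you spell out the ``for $n$ sufficiently large'' quantifier that the paper leaves to the phrase ``eventually strictly below.''
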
 

\begin{proof}
The maximum possible value $\frac{n(n-1)}{2}$ of any $\alpha(n)$ is attained on a $P_2$-empty graph, on which
$x \neq y \implies \neg P_2(x,y)$.
Apply Theorem \ref{char-simple} which says that when $\vp$ does not have the tree property 
then we have, after localization, a uniform finite bound $k$ on the size of a $P_2$-empty graph $X \subset P_1$.
So the function $\alpha$ is eventually strictly below the maximum.  
\end{proof}

\begin{cor} The function $\alpha(n)$ is meaningful, i.e. after localization
\[ \frac{n(n-1)}{2} > ~~\alpha(n)~~ > 0\] 
precisely when $\vp$ is simple unstable. 
\end{cor}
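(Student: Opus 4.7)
The statement is an equivalence between a classification-theoretic property of $\vp$ and a combinatorial property of $\alpha$, and the plan is to prove each direction by directly assembling the two preceding results of this section: Observation \ref{simple-not-maxl}, which handles the upper bound, and the unlabeled observation at the top of the section, which handles the lower bound. The combination is essentially a bookkeeping argument.

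For the ``if'' direction, assume $\vp$ is simple unstable. Observation \ref{simple-not-maxl}, via Theorem \ref{char-simple}(1), yields a localization in which $P_2$-empty subsets of $P_1$ are uniformly bounded in size by some $m_2$, so $\alpha(n) < \frac{n(n-1)}{2}$ for all $n > m_2$. The opening observation of the section gives that $P_1$ contains an empty pair whenever $\vp$ is simple unstable; I would arrange the localization so as to contain such a pair (equivalently, ensure $m_2 \geq 2$), yielding $\alpha(n) \geq 1 > 0$ for all $n \geq 2$. Combining, $0 < \alpha(n) < \frac{n(n-1)}{2}$ for all sufficiently large $n$.

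For the ``only if'' direction I would contrapose: suppose $\vp$ is not simple unstable. If $\vp$ is stable, Theorem \ref{char-stable}(1) provides a localization making $P_1$ a $P_2$-complete graph, forcing $\alpha(n) = 0$ and breaking the lower bound. If $\vp$ is not simple, Theorem \ref{char-simple}(2) gives infinite $(r+1)$-empty subsets of $P_1$ for cofinitely many $r$; tracing through the localization, this produces $P_2$-empty subsets of unbounded size, forcing $\alpha(n) = \frac{n(n-1)}{2}$ for infinitely many $n$ and breaking the upper bound.

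The main obstacle is matching localizations: in the forward direction, one must verify that a single localization simultaneously witnesses both the uniform bound on $P_2$-empty graphs and the existence of an empty pair; in the converse, one must check that the infinite $(r+1)$-empty subgraphs provided by Theorem \ref{char-simple}(2) translate into $P_2$-empty subsets of the localized $P_1$. Both are essentially routine given the definable nature of localization (Convention \ref{localization-depends-on-T}), but they are where the presentation needs the most care.
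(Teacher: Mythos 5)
The paper gives this corollary no proof of its own; it is offered as an immediate combination of the unlabeled observation at the top of the section (stable $\Rightarrow$ density $1$ after localization; simple unstable $\Rightarrow$ $P_1$ has an empty pair) with Observation \ref{simple-not-maxl} (simple $\Rightarrow$ $\alpha(n)<\frac{n(n-1)}{2}$ after localization). Your forward direction and your ``stable'' sub-case of the contrapositive track exactly this intended assembly.

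Your ``not simple'' sub-case, however, contains a genuine gap which you mislabel as routine. You claim that the infinite $(r+1)$-empty graphs of Theorem \ref{char-simple}(2) ``translate into $P_2$-empty subsets of the localized $P_1$.'' The implication goes the wrong way: since $P_n(y_1,\dots,y_n)$ entails $P_2(y_i,y_j)$ for every pair, a $P_2$-empty set is automatically $P_n$-empty for all $n$, but a $P_{r+1}$-empty set for $r\geq 2$ is entirely compatible with $P_2$ holding on every pair (indeed when the sequence has large support one should expect this). Theorem \ref{char-simple}(2) is stated for ``all but finitely many $r$'' and gives no control over whether $r=1$ is among them, so it does not hand you an infinite $P_2$-empty graph, and $\alpha$ counts only $P_2$-edges. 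Without an infinite $P_2$-empty graph you cannot conclude $\alpha(n)=\frac{n(n-1)}{2}$ for arbitrarily large $n$, and the contrapositive is not closed. Either the ``only if'' direction needs a sharper fact (that non-simplicity of $\vp$ yields an infinite $P_2$-empty graph surviving localization, which is plausible but would have to be cited or proved from \cite{mm-article-2}), or the corollary should be read as asserting only the forward, ``meaningfulness'' direction, which is all the two preceding observations actually establish.
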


\noindent With some care we can easily restrict the range further. A famous theorem of Tur\'an says:

\begin{thm-lit} \emph{(Tur\'an, \cite{rsurvey}:Theorem 2.2)} \label{turan-1}
If $G_n$ is a graph with $n$ vertices and

\[  e(G) > \left( 1 - \frac{1}{k-1} \right) \frac{n^2}{2}                               \]

then $G_n$ contains a complete subgraph on $k$ vertices.  
\end{thm-lit}

\begin{defn}
$X = \langle a^t_i : t<2, i<\omega \rangle \subset P_1$ is an \emph{$(\omega, 2)$-array} if
for all $n<\omega$, 
\[ P_n(a^{t_1}_{i_1},\dots a^{t_n}_{i_n}) ~\iff~ (\forall j, \ell \leq n) \left( i_j = i_\ell \implies t_j = t_\ell \right) \] 
\end{defn}

\begin{claim} \emph{(Claim 4.5 of \cite{mm-article-2})} 
The following are equivalent, for a formula $\vp$ with characteristic sequence $\langle P_n \rangle$:
\begin{enumerate}
\item $\vp$ has the independence property.
\item $\langle P_n \rangle$ has an $(\omega, 2)$-array.
\end{enumerate}
\end{claim}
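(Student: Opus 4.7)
The plan is to prove each direction by an explicit construction, with the duality clause of Definition \ref{characteristic-sequence} doing the work in the forward direction and the built-in $2$-inconsistency of the array doing the work in the backward direction.

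For (1) $\Rightarrow$ (2), I would start from a sequence $\{b_i : i < \omega\}$ witnessing IP for $\vp$, so for any disjoint finite $S_0, S_1 \subset \omega$ the set $\{\vp(x;b_i) : i \in S_0\} \cup \{\neg\vp(x;b_i) : i \in S_1\}$ is consistent. By the duality assumption in Definition \ref{characteristic-sequence}, fix for each $i$ a parameter $b'_i$ with $\vp(x;b'_i) \leftrightarrow \neg\vp(x;b_i)$, and set $a^0_i := b_i$ and $a^1_i := b'_i$. Verifying the array condition on a tuple $(a^{t_1}_{i_1},\dots,a^{t_n}_{i_n})$ is a clean case split: if some pair $j \neq \ell$ has $i_j = i_\ell$ with $t_j \neq t_\ell$, then the conjunction defining $P_n$ contains both $\vp(x;b_{i_j})$ and $\neg\vp(x;b_{i_j})$, so $P_n$ fails; otherwise each index carries a unique $t$-value and the conjunction collapses to $\{\vp(x;b_i) : i \in S_0\} \cup \{\neg\vp(x;b_i) : i \in S_1\}$ for disjoint finite $S_0,S_1$ read off from the tuple, which is consistent by IP.

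For (2) $\Rightarrow$ (1), I would claim that the single row $\{a^0_i : i < \omega\}$ already witnesses IP for $\vp$. Given disjoint finite $S_0, S_1 \subset \omega$, form the tuple consisting of $a^0_i$ for $i \in S_0$ and $a^1_i$ for $i \in S_1$. No index occurs with both $t$-values, so the array condition gives that $P_{|S_0|+|S_1|}$ holds on this tuple, yielding $c$ realizing $\vp(c; a^0_i)$ for $i \in S_0$ and $\vp(c; a^1_i)$ for $i \in S_1$. The key step is to extract negations: for each $i \in S_1$, the array condition applied to the $2$-tuple $(a^0_i, a^1_i)$ fails (same index, different $t$), so $\neg P_2(a^0_i, a^1_i)$, i.e.\ $\vp(x; a^0_i) \wedge \vp(x; a^1_i)$ is inconsistent. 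Since $c$ satisfies $\vp(c; a^1_i)$ it is therefore forced to satisfy $\neg\vp(c; a^0_i)$, giving exactly the alternation required for IP.

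I do not expect any substantial obstacle. The only conceptual point worth flagging is that in direction (2) $\Rightarrow$ (1) the ``second row'' $\{a^1_i\}$ does \emph{not} directly encode negated instances of $\vp$ (unlike the $b'_i$ in direction (1) $\Rightarrow$ (2)); rather, it blocks $\vp(x; a^0_i)$ for any realizer of the corresponding $a^1_i$-row, by virtue of the inconsistency hard-wired into the array. Recognizing this asymmetric use of the duality clause versus the array inconsistency is the whole content of the argument.
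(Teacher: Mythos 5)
Your proof is correct, and the argument is the natural one the definitions invite. Note that the paper itself does not prove this statement — it is imported verbatim as Claim 4.5 of the cited companion paper — so there is no in-text proof to compare against; but your argument is the standard one and almost certainly matches. Both directions are handled cleanly: in (1)~$\Rightarrow$~(2) the duality clause of Definition~\ref{characteristic-sequence} supplies the second row, and the case split on whether an index repeats with opposite $t$-values exactly mirrors the biconditional defining the array; in (2)~$\Rightarrow$~(1) you correctly observe that the top row alone witnesses IP, with the forced negation $\neg\vp(c;a^0_i)$ coming not from any syntactic relation between $a^0_i$ and $a^1_i$ but from $\neg P_2(a^0_i,a^1_i)$, which is precisely the $2$-inconsistency the array encodes. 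The asymmetry you flag at the end — duality gives you the second row going one way, while the array's built-in inconsistency simulates negation going the other way — is indeed the conceptual content here, and you have it right. One minor point you could make explicit but which is not a gap: in (1)~$\Rightarrow$~(2) you should note $a^t_i\in P_1$, which follows from IP with $|S_0|+|S_1|=1$.
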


\begin{obs} \label{right-count}
Suppose that $\langle P_n \rangle$ has an $(\omega, 2)$-array. 
Then $\alpha(n) \geq \left\lfloor \frac{n}{2} \right\rfloor$.
\end{obs}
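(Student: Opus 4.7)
The plan is to exhibit, for each $n$, an explicit $n$-element subset $X \subset P_1$ with $\hat{e}(X) \geq \lfloor n/2 \rfloor$, drawn from the given $(\omega,2)$-array $\langle a^t_i : t<2, i<\omega \rangle$.

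First I would read off what the array condition says at the level of $P_2$: unwinding the definition with $n=2$, we have $\neg P_2(a^{t_1}_{i_1}, a^{t_2}_{i_2})$ exactly when $i_1 = i_2$ and $t_1 \neq t_2$. So the non-edges among the array are precisely the ``vertical'' pairs $\{a^0_i, a^1_i\}$ for $i<\omega$, and the array restricted to $P_2$ is a perfect matching inside a complete graph on the remaining pairs.

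Next, set $k = \lfloor n/2 \rfloor$ and define
\[ X \;=\; \{ a^0_0, a^1_0, a^0_1, a^1_1, \dots, a^0_{k-1}, a^1_{k-1} \} \cup Y, \]
where $Y = \emptyset$ if $n=2k$ and $Y = \{a^0_k\}$ if $n = 2k+1$. Then $|X| = n$, and by the previous paragraph the omitted edges in $X$ are precisely the $k$ vertical pairs $\{a^0_i, a^1_i\}$ with $i < k$; any element of $Y$ participates only in edges, since its index $k$ does not match any other index appearing in $X$. Hence $\hat{e}(X) = k = \lfloor n/2 \rfloor$, and by definition of $\alpha$ this gives $\alpha(n) \geq \lfloor n/2 \rfloor$.

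There is no real obstacle here; the content lies entirely in recognizing that the $(\omega,2)$-array condition pins down $P_2$-failure to the ``vertical'' pairs, and that one can simply harvest $\lfloor n/2 \rfloor$ disjoint such pairs from the array. The only minor care needed is the parity case $n = 2k+1$, handled by appending a single extra element whose index is fresh so as not to create or destroy any additional non-edges.
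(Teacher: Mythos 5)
Your proof is correct and is essentially the argument the paper leaves implicit (the observation is stated without proof). Reading off $P_2$ from the $(\omega,2)$-array definition to see that the only non-edges are the vertical pairs $\{a^0_i,a^1_i\}$, then harvesting $\lfloor n/2\rfloor$ disjoint such pairs, is exactly the intended computation.
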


\begin{cor} \label{lower-bound}
When $\vp$ is simple unstable, then after localization
\[ \left(1-\frac{1}{k-1}\right)\frac{n^2}{2} \geq \alpha(n) \geq \left\lfloor \frac{n}{2} \right\rfloor\] 
\end{cor}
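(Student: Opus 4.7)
The plan is to prove the two inequalities separately, since the lower bound is essentially a restatement of Observation \ref{right-count} plus the fact that simple unstable implies IP, while the upper bound follows by dualizing and applying Tur\'an.

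For the lower bound, I would first note that since $\vp$ is unstable and simple, $\vp$ must have the independence property (instability forces IP or SOP, and simplicity rules out SOP and hence $SOP_3$, in particular). By Claim 4.5 of \cite{mm-article-2} quoted above, $\langle P_n \rangle$ therefore carries an $(\omega,2)$-array. Then Observation \ref{right-count} gives $\alpha(n) \geq \lfloor n/2 \rfloor$ immediately: the tuple $a^0_0, a^1_0, a^0_1, a^1_1, \dots$ of length $n$ drawn from the array has at least $\lfloor n/2 \rfloor$ omitted $P_2$-edges, namely the pairs $(a^0_i, a^1_i)$, because the array condition instantiated at $n=2$ yields $\neg P_2(a^0_i, a^1_i)$ whenever $i = i$ but $0 \neq 1$.

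For the upper bound, the idea is to translate the problem about omitted edges into a problem about edges in the dual graph, where Tur\'an applies directly. Since $\vp$ is simple, Theorem \ref{char-simple}(1) gives, after localization, a uniform finite bound $m$ on the size of a $P_2$-empty subgraph of $P_1$; set $k = m+1$. Now fix any $X \subset P_1$ with $|X| = n$, and consider its dual $X'$ in the sense of Convention \ref{notation}: one has $e(X') = \hat{e}(X)$, and a complete subgraph of $X'$ on $k$ vertices would be a $P_2$-empty subgraph of $X$ of size $k > m$, which is impossible after the chosen localization. Applying the contrapositive of Tur\'an (Theorem \ref{turan-1}) to $X'$ yields
\[ \hat{e}(X) \;=\; e(X') \;\leq\; \left(1 - \frac{1}{k-1}\right)\frac{n^2}{2}, \]
and taking the maximum over all such $X$ gives the required upper bound on $\alpha(n)$.

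Neither step looks like a real obstacle: the lower bound is bookkeeping on the $(\omega,2)$-array, and the upper bound is a direct application of Tur\'an once one remembers to pass to the dual graph. The only subtle point is to be explicit that the constant $k$ appearing in the corollary is the Theorem \ref{char-simple}(1) bound on $P_2$-empty subgraphs, incremented by one, and that the localization is chosen so this bound holds; I would spell this out so that the reader is not left guessing what $k$ refers to.
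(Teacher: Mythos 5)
Your proof is correct and follows essentially the same route as the paper: derive IP from simple unstable, cite the $(\omega,2)$-array and Observation \ref{right-count} for the lower bound, and for the upper bound pass to the dual graph and apply the contrapositive of Tur\'an using the uniform bound on $P_2$-empty graphs from Theorem \ref{char-simple}. In fact your bookkeeping is a touch more careful than the paper's: the paper's proof says to ``let $k>1$ be the uniform finite bound on the size of an empty graph,'' i.e.\ $k = m$ where empty graphs have size at most $m$; but then the dual is only guaranteed to omit $K_{m+1}$, and Tur\'an's contrapositive yields $\hat e(X) \leq \bigl(1-\frac{1}{m}\bigr)\frac{n^2}{2}$, which is the constant you get with your choice $k = m+1$, not the slightly smaller $\bigl(1-\frac{1}{m-1}\bigr)\frac{n^2}{2}$ one would read off by plugging $k=m$ into the corollary's display. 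Your explicit $k = m+1$ resolves this off-by-one ambiguity and makes the statement and proof fit together literally. The rest — including the short verification that the first $n$ entries $a^0_0, a^1_0, a^0_1, a^1_1, \dots$ of the array carry at least $\lfloor n/2\rfloor$ missing $P_2$-edges — is just the content of Observation \ref{right-count} spelled out and is fine.
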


\begin{proof}
If $\vp$ is simple unstable, $\vp$ has the independence property and so $P_1$ contains an $(\omega, 2)$-array; so 
the righthand side is Observation \ref{right-count}. For the lefthand side, let $k > 1$ be the uniform finite bound on the size of an empty graph
from Theorem \ref{char-simple}, and apply Tur\'an's theorem to the dual graph. 
\end{proof}

At the end of Section \ref{section:regularity} we will give a proof of the following: 

\begin{propn} \label{counting-thm}
When $\vp$ is simple unstable either
\[ \left(1-\frac{1}{1-k}\right)\frac{n^2}{2} ~\geq~ \alpha(n) ~\geq~ \frac{n^2}{4} ~\mbox{\hspace{8mm}\emph{or}\hspace{8mm}}~ \mathcal{O}(n^2) > \alpha(n) \geq \left\lfloor \frac{n}{2} \right\rfloor \] 
\end{propn}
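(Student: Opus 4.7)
The plan is to combine Szemer\'edi's regularity lemma with the property $(*)$ announced in the introduction: when $\vp$ is simple, the density of any sequence of arbitrarily large $\epsilon$-regular pairs $(X,Y) \subset P_1$ (under the edge relation $P_2$) must tend to $0$ or to $1$. Since Proposition \ref{counting-thm} is slated for the end of Section \ref{section:regularity}, I am free to treat this regularity-theoretic input as given; what remains is a purely combinatorial extraction of the two cases of the conclusion.

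After localization, fix a small $\epsilon > 0$ and split on the dichotomy: either there are, for every $m < \omega$, $\epsilon$-regular pairs $(X,Y) \subset P_1$ with $|X| = |Y| \geq m$ whose density is bounded away from $1$, or not. In the first horn, property $(*)$ forces such densities to converge to $0$. For any $n$, pick such a pair with $|X| = |Y| = \lfloor n/2 \rfloor$ and density at most $\epsilon'$; then $X \cup Y$ is an $n$-subset of $P_1$ on which at least $(1-\epsilon')\lfloor n/2 \rfloor^2$ edges are omitted between $X$ and $Y$, which gives asymptotically $\alpha(n) \geq n^2/4$. The matching upper bound $\left(1 - \frac{1}{k-1}\right)\frac{n^2}{2}$ is already in hand from Corollary \ref{lower-bound}, via Tur\'an's theorem applied to the dual graph using the uniform bound $k$ on empty $P_2$-subgraphs supplied by Theorem \ref{char-simple}.

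In the second horn, every sufficiently large $\epsilon$-regular pair has density close to $1$. To deduce $\alpha(n) = o(n^2)$, take any $n$-subset $X \subset P_1$ realizing $\alpha(n)$ and apply the regularity lemma to partition $X$ into $t = t(\epsilon)$ parts of size approximately $n/t$. Each of the at most $t^2$ regular pairs contributes at most $\epsilon'(n/t)^2$ omitted edges, since its density is $\geq 1 - \epsilon'$; the at most $\epsilon t^2$ irregular pairs contribute at most $\epsilon n^2$; and the within-part edges add at most $n^2/t$. Letting $\epsilon, \epsilon', 1/t \to 0$ in the order dictated by Szemer\'edi yields $\alpha(n)/n^2 \to 0$. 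The lower bound $\lfloor n/2 \rfloor$ in this horn is Observation \ref{right-count}, since simple unstable $\vp$ has the independence property.

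The main obstacle is not in the present counting argument but in justifying $(*)$ in Section \ref{section:regularity}: one must translate model-theoretic simplicity of $\vp$ into the statement that no $P_2$-bipartite pair of arbitrarily large $\epsilon$-regular parts can maintain density bounded strictly away from both $0$ and $1$. A minor technical point is that the localization must be chosen once so that a single localization witnesses one horn of the dichotomy uniformly in $n$; since localizations are definable in $T$, this is compatible with Convention \ref{localization-depends-on-T} and causes no difficulty.
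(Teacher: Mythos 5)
Your high-level strategy — a Szemer\'edi dichotomy on density behavior plus the imported Tur\'an and $(\omega,2)$-array bounds — matches the paper's, and your second horn is actually a nice streamlining. The paper's Theorem \ref{counting-alpha-2} iterates the regularity decomposition $t$ times precisely to control the within-part missing edges (its Lemma \ref{count}(3) bounds only the \emph{interstitial} missing edges); you observe instead that a \emph{single} application with $m_0$ chosen large already bounds the within-part contribution by $n^2/(2m_0)$, which together with $\epsilon n^2$ from the irregular pairs and $\epsilon' n^2$ from the near-full regular pairs gives $\alpha(n) < c_0 n^2$ directly. That single-pass bound is correct and cleaner than what appears in the paper.

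The gap is in the first horn, and there is also a dependency problem. You fix $\epsilon$ at the outset and then claim that densities bounded away from $1$ "must converge to $0$ by property $(*)$.'' But $(*)$ is an informal statement from the introduction whose precise form is Corollary \ref{c-excluded-middle}, which appears in Section \ref{section:order-genericity} \emph{after} Proposition \ref{counting-thm}, so it is not available here; the paper derives everything it needs inside Section \ref{section:regularity} from the Key Lemma. More seriously, even the precise statement only controls densities in the joint limit $n\to\infty$, $\epsilon\to 0$; for a fixed $\epsilon$, the Key Lemma can only produce $K_{s,s}$ in the dual for $s$ up to a threshold determined by $\epsilon$, so densities may cluster at a value strictly between $0$ and $1-\epsilon$. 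Your first horn then yields only $\alpha(n) \geq c\,n^2/4$ with $c<1$, not the stated $n^2/4$. The paper sidesteps this with the cleaner dichotomy "does $P_1$ contain an infinite empty pair?'': Observation \ref{k-l-apply} quantifies over \emph{all} $\epsilon$ and all $N$, uses compactness to produce an actual infinite empty pair, and then the bound $\alpha(n)\geq \lfloor n^2/4\rfloor$ is immediate by taking two halves of that pair. If you reformulate your dichotomy to quantify over $\epsilon$ and replace "density converges to $0$'' with "infinite empty pair exists'' via Observation \ref{k-l-apply}, the argument closes, and your second-horn simplification remains valid.
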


The proof will follow from Theorem \ref{counting-alpha-2} below, which will show more, namely that for $\vp$ simple unstable, either
$\mathcal{O}(n^2) > \alpha(n)$ or there exists an infinite empty pair in $P_1$. 

Our strategy is going to be to show that in the absence of such an ``empty pair'' 
we can repeatedly partition sufficiently large graphs into many pieces of roughly equal size 
in such a way that, at each stage, the bulk of the omitted edges must occur inside the (eventually, much smaller) pieces. 
The main tool will be Theorem \ref{szr} below. 

\section{Szemer\'edi regularity} \label{section:regularity}

We begin with a review of Szemer\'edi's celebrated \emph{regularity lemma}. 
Recall that $\epsilon, \delta$ are real numbers, $0 < \epsilon < 1$ and $0 \leq \delta \leq 1$, following Convention \ref{notation}.

\begin{defn} \label{graph-regularity} \cite{sz}, \cite{rsurvey} 
The finite bipartite graph $(X,Y)$ is $\epsilon$-regular if for every $X^\prime \subset X$, $Y^\prime \subset Y$
with $|X^\prime| \geq \epsilon|X|$, $|Y^\prime| \geq \epsilon|Y|$, we have: $|\delta(X,Y) - \delta(X^\prime, Y^\prime)|<\epsilon$.
\end{defn} 

The regularity lemma says that sufficiently large graphs can always be partitioned 
into a fixed finite number of pieces $X_i$ of approximately equal size 
so that almost all of the pairs $(X_i, X_j)$ are $\epsilon$-regular. 

\begin{thm-lit} \label{szr} \emph{(Szemer\'edi's Regularity Lemma \cite{rsurvey}, \cite{sz})}
For every $\epsilon, m_0$ there exist $N = N(\epsilon, m_0)$, $m = m(\epsilon, m_0)$ such that 
for any graph $X$, $N \leq |X| < \aleph_0$, for some $m_0 \leq k \leq m$ there exists a partition $X = X_1 \cup \dots \cup X_k$
satisfying:

\begin{itemize}
\item $| |X_i| - |X_j| | \leq 1$ for $i,j \leq k$
\item All but at most $\epsilon k^2$ of the pairs $(X_i, X_j)$ are $\epsilon$-regular.
\end{itemize}
\end{thm-lit}

One important consequence is that we may, approximately, describe large graphs $G$ as random graphs
where the edge probability between $x_i$ and $x_j$ is the density $d_{i,j}$ between components $X_i, X_j$ in some
Szemer\'edi-regular decomposition. We include here two formulations of this idea from the literature, the first for intuition
and the second for our applications.

\begin{thm-lit} \emph{(from Gowers \cite{gowers})} \label{gowers-1}
For every $\alpha > 0$ and every $k$ there exists $\epsilon > 0$ with the following property. Let
$V_1,\dots V_k$ be sets of vertices in a graph $G$, and suppose that for each pair $(i,j)$ the
pair $(V_i, V_j)$ is $\epsilon$-regular with density $\delta_{ij}$. Let $H$ be a graph with vertex set
$(x_1,\dots x_k)$ and let $v_i \in V_i$ be chosen uniformly at random, the choices being independent. Then 
the probability that $v_iv_j$ is an edge of $G$ iff $x_ix_j$ is an edge of $H$ differs from 
$\Pi_{x_ix_j\in H} \delta_{ij} \Pi_{x_ix_j\notin H} (1-\delta_{ij})$ by at most $\alpha$. 
\end{thm-lit}

The formulation we will use, Theorem \ref{key-lemma}, requires a preliminary definition.

\begin{defn} \cite{rsurvey} \emph{(The reduced graph)}
\begin{enumerate}
\item Let $G = X_1, \dots X_k$ be a partition of the vertex set of $G$ into disjoint components.
Given parameters $\epsilon, \delta$, define the \emph{reduced graph} $R(G,\epsilon, \delta)$ to be 
the graph with vertices $x_i$ $(1\leq i \leq k)$ and an edge between $x_i, x_j$ just in case the pair
$(X_i, X_j)$ is $\epsilon$-regular of density $\geq \delta$.

\item Write $R(t)$ for a full graph of height $t$ whose reduced graph is $R$, i.e., 
$R(t)$ consists of $k$ components $X_1,\dots X_k$, each with $t$ vertices, such that
$e(X_i)=0$, and $\delta(X_i, X_j)=1$ iff there is an edge between $x_i$ and $x_j$ in $R$. 
\end{enumerate}
\end{defn}

The following lemma (called the ``Key Lemma'' in \cite{rsurvey}) says that sufficiently small subgraphs
of the reduced graph must actually occur in the original graph $G$. 

\begin{thm-lit} \emph{(Key Lemma, \cite{rsurvey}:Theorem 2.1)} \label{key-lemma}
Given $\delta > \epsilon > 0$, a graph $R$, and a positive integer $m$, let $G$ be any graph whose
reduced graph is $R$, and let $H$ be a subgraph of $R(t)$ with $h$ vertices and maximum degree $\Delta > 0$.
Set $d = \delta - \epsilon$ and $\epsilon_0 = d^{\Delta}/(2+\Delta)$. Then
if $\epsilon \leq \epsilon_0$ and $t-1 \leq \epsilon_0 m$, then $H \subset G$. Moreover
the number of copies of $H$ in $G$ is at least $(\epsilon_0 m)^h$.  
\end{thm-lit}

\begin{rmk} \label{reft}
In the statement of the Key Lemma, ``$H \subset G$'' means that
there is a bijection $f: H \rightarrow X \subset G$ such that $e(h_1,h_2)$ implies $e(f(h_1), f(h_2))$. 
With some slight modifications (recording whether a missing edge in the reduced graph means the density is near 0
or the pair is not regular; and using the dual graphs when necessary) 
we may assume ``$H \subset G$'' has the usual meaning of isomorphic embedding, but this will not be 
an issue for the arguments in this section.
\end{rmk}

We now work towards a proof of Proposition \ref{counting-thm}.

\begin{conv} \label{interstitial} \emph{(Interstitial edges, $b_{\epsilon, \ell}$, $N_{\epsilon, \ell}$, $E_{\epsilon, \ell}$)}
\begin{enumerate}
\item Let $G$ be a graph and let $G = X_1 \cup \dots \cup X_n$ be a decomposition into disjoint components, for instance as given 
by Theorem \ref{szr}. Call any edge between vertices $x \in X_i, z \in X_j$, $i\neq j$ an \emph{interstitial edge}. 
\item Let $b_{\epsilon, \ell}$ denote the upper bound on the necessary number of components, given by the regularity lemma
as a function of $\epsilon, \ell$.
\item Write \emph{$(\epsilon, \ell)^*$-decomposition} to denote any Szemer\'edi-regular decomposition \emph{into $k$ components},
for any $\ell \leq k \leq b_{\epsilon, \ell}$. 
\item Let $N_{\epsilon, \ell}$ denote the threshold size given by the regularity lemma as a function of
$\epsilon, \ell$, such that any graph $X$ with $|X| > N_{\epsilon, \ell}$ admits an $(\epsilon, \ell)^*$-decomposition. 
\item Let $E_{\epsilon, \ell} \subset \omega$ ($E$ for ``exactly'')
be the (possibly empty) set of cardinalities $n$ for which $|X| = n$ implies $X$ admits a Szemer\'edi-regular decomposition
into \emph{exactly} $\ell$ components. See the next Remark. 
\end{enumerate}
\end{conv}

\begin{rmk} \emph{On Definition \ref{interstitial}(2)-(4): the Regularity Lemma, along with the pigeonhole principle,
implies that for cofinally many $\ell$, $E_{\epsilon, \ell}$ is infinite. Often, as Corollary \ref{count}(3) suggests,
for the purposes of our asymptotic argument it is sufficient to know that the number of components fluctuates in a 
certain fixed range, as given by the Regularity Lemma.}
\end{rmk}

An easy application of the Key Lemma shows that: 

\begin{obs} \label{k-l-apply}
Suppose that there exists $\delta$, $0<\delta<1$ such that for all $0 <\epsilon <1$ and all 
$N \in \mathbb{N}$ there exist disjoint subsets $X_N, Y_N \subset P_1$, $|X_N|=|Y_N| \geq N$
such that $(X_N,Y_N)$ is $\epsilon$-regular with density $\delta$. Then $P_1$ contains an infinite empty pair.
\end{obs}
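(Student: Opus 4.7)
The plan is to apply the Key Lemma (Theorem \ref{key-lemma}) to the \emph{dual} of the hypothesized $\epsilon$-regular pair, producing $t \times t$ complete bipartite subgraphs of $(X_N, Y_N)'$ for arbitrarily large $t$. In the original graph these translate to $t \times t$ empty pairs in $P_1$, and a standard compactness argument will then yield an infinite empty pair in the monster.

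With $\delta$ as in the hypothesis, I would fix $\delta^* := 1 - \delta > 0$ as the density threshold for the Key Lemma. For each target size $t < \omega$, take $H := K_{t,t}$, so $h = 2t$ and the maximum degree is $\Delta = t$. I then choose $\epsilon > 0$ small enough that $\epsilon < \delta^*$ and $\epsilon \leq \epsilon_0 := (\delta^* - \epsilon)^t/(2+t)$; this is possible because the right-hand side tends to the fixed positive quantity $(\delta^*)^t/(2+t)$ as $\epsilon \to 0$. Fixing $N$ with $N \geq (t-1)/\epsilon_0$ and applying the hypothesis produces disjoint $X_N, Y_N \subset P_1$ with $|X_N| = |Y_N| \geq N$ and $(X_N, Y_N)$ $\epsilon$-regular of density $\delta$. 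The dual $(X_N, Y_N)'$ is then $\epsilon$-regular of density $\delta^*$ (inverting bipartite edges clearly preserves $\epsilon$-regularity and sends density $\delta$ to $1-\delta$), so its reduced graph with parameters $(\epsilon, \delta^*)$ is the single edge $K_2$ and $R(t) = K_{t,t} = H$. The Key Lemma then embeds $H$ into $(X_N, Y_N)'$ respecting the bipartition (cf. Remark \ref{reft}), yielding $A \subset X_N$ and $B \subset Y_N$ with $|A| = |B| = t$ and $\neg P_2(a,b)$ for every $a \in A$, $b \in B$.

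To pass from these finite configurations to an infinite empty pair, I would invoke Convention \ref{localization-depends-on-T}: it suffices to show the partial type asserting ``$\{x_i, y_j : i, j < \omega\}$ are pairwise distinct elements of $P_1$ with $\neg P_2(x_i, y_j)$ for all $i, j < \omega$'' is consistent with $T$. The previous paragraph shows this type is finitely satisfiable, so it is realized in the monster, giving the desired infinite empty pair.

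The main point requiring care is the interplay of the Key Lemma's constants: $\epsilon$ must be chosen small (depending on $t$) \emph{before} the component size $N$ is chosen large, but the hypothesis precisely accommodates this, furnishing $\epsilon$-regular pairs of arbitrarily large component size for each prescribed $\epsilon$. Beyond this bookkeeping the argument is entirely standard, and compactness handles the last step without incident.
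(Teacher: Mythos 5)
Your proof is correct and follows essentially the same route as the paper's: apply the Key Lemma to the dual pair $(X_N,Y_N)'$, which remains $\epsilon$-regular with density $1-\delta$ bounded away from $0$ and $1$, extract arbitrarily large complete bipartite subgraphs (hence empty pairs in the original $P_2$-graph), and conclude by compactness. You simply spell out the bookkeeping of the Key Lemma's constants and the final compactness step, both of which the paper leaves implicit.
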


\begin{proof}
Apply the Key Lemma to each dual graph $(X_N,Y_N)^\prime$, which is still regular and whose density remains bounded away from $0$ and $1$. 
For each $t <\omega$, 
for all $N$ sufficiently large, $(X_N, Y_N)^\prime$ contains a complete bipartite graph on $t$ vertices, as this occurs as a subgraph of $R(t)$. 
\end{proof}

\begin{lemma} \label{count}
Suppose that $P_1$ does not contain an infinite empty pair. 

\begin{enumerate}
\item There is a function $f:(0,1)\times \omega \rightarrow (0,1)$ monotonic which approaches $1$
as $\epsilon \rightarrow 0$ and $N \rightarrow \infty$ such that if $(X,Y)$ is an $\epsilon$-regular pair with $|X|=|Y|=n$ then 
$d(X,Y) \geq f(\epsilon, N)$. 

\item There is a function $g: ((0,1)\times \omega) \times \omega \rightarrow (0,1)$, which is defined on all $((\epsilon, \ell), n)$ for which
$n \geq N_{\epsilon, \ell}$, and which is monotonically increasing and approaches $1$ as $(\epsilon, \ell)$ stays fixed and $n \rightarrow \infty$, 
such that if $|X| = n$ then the density between any two regular components in an $(\epsilon, \ell)^*$-decomposition of $X$ 
is at least $g((\epsilon, \ell), n)$.

\item For every constant $c > 0$, and for all $\epsilon_0 > 0$, there exist $0 < \epsilon < \epsilon_0$ 
and for each such $\epsilon$, cofinally many $\ell < \omega$ such that:  
for all $n$ sufficiently large and all graphs $X$ with $|X| = n$, 
the number of interstitial edges in any $(\epsilon, \ell)^*$-decomposition of $X$ is strictly less than $cn^2$.
\end{enumerate}
\end{lemma}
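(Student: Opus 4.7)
The plan is to reduce the three claims to one analytic input: under the standing hypothesis, the density of any $\epsilon$-regular pair in $P_1$ is forced toward $1$ as $\epsilon$ shrinks and the pair grows. The driving observation is a modest strengthening of the contrapositive of Observation \ref{k-l-apply}: since the Key Lemma needs only that the dual density be bounded away from $0$, the absence of an infinite empty pair in $P_1$ rules out $\epsilon$-regular pairs of arbitrarily large size with density bounded above by any fixed $\delta_0 < 1$, provided $\epsilon$ is sufficiently small.

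For (1), I would define
\[ f(\epsilon, N) := \inf\{\delta(X,Y) : (X,Y) \text{ an } \epsilon\text{-regular pair in } P_1,\ |X|=|Y|\geq N\}, \]
noting that monotonicity is immediate since $\epsilon$-regularity is monotone in $\epsilon$ (a smaller $\epsilon$ is a strictly stronger condition) and enlarging $N$ shrinks the set over which the infimum is taken. Suppose toward contradiction $f(\epsilon, N) \not\to 1$, so there are $\delta_0 < 1$ and sequences $\epsilon_k \to 0$, $N_k \to \infty$ together with $\epsilon_k$-regular pairs $(X_k, Y_k) \subset P_1$ of size $\geq N_k$ and density $\leq \delta_0$. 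For each finite $t$, apply Theorem \ref{key-lemma} to the dual $(X_k, Y_k)^\prime$ (still $\epsilon_k$-regular with density $\geq 1 - \delta_0 > 0$), with $H = K_{t,t}$ inside $R(t)$; once $k$ is large enough for the parameters of the Key Lemma to be satisfied, this produces a $P_2$-empty bipartite pair of size $t$ inside $P_1$. Since the existence of such pairs is uniformly first-order expressible, compactness in a sufficiently saturated model (Convention \ref{localization-depends-on-T}) yields an infinite empty pair in $P_1$, contradicting the hypothesis.

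For (2), fix $\epsilon, \ell$ and $n \geq N_{\epsilon, \ell}$. Any $(\epsilon, \ell)^*$-decomposition of a graph $X \subset P_1$ of size $n$ has at most $b_{\epsilon, \ell}$ components, each of size at least $\lfloor n/b_{\epsilon, \ell} \rfloor$, so each $\epsilon$-regular pair arising in the decomposition is, in particular, an $\epsilon$-regular pair in $P_1$ with both sides at least that large. Setting $g((\epsilon, \ell), n) := f(\epsilon, \lfloor n/b_{\epsilon, \ell} \rfloor)$ and invoking (1) gives the density bound; monotonicity in $n$ and the limit $g \to 1$ as $n \to \infty$ are inherited from $f$. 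For (3), given $c > 0$ and $\epsilon_0 > 0$, pick $\epsilon < \min(\epsilon_0, c/3)$; then for cofinally many $\ell$, and for $n$ large enough that $1 - g((\epsilon, \ell), n) < 2c/3$, any $(\epsilon, \ell)^*$-decomposition of $X$ into $\ell \leq k \leq b_{\epsilon, \ell}$ parts contributes at most $\epsilon k^2 \cdot (n/k)^2 = \epsilon n^2 < (c/3)n^2$ omitted interstitial edges from the at most $\epsilon k^2$ irregular pairs, plus at most $(1 - g)\binom{k}{2}(n/k)^2 < (c/3)n^2$ omitted interstitial edges from the regular pairs, summing to strictly less than $cn^2$.

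The hard step is the compactness passage inside (1): the Key Lemma delivers only a finite $K_{t,t}$-empty bipartite configuration for each $t$, and turning this family into an actual infinite empty pair that contradicts the hypothesis requires invoking saturation of the ambient model via Convention \ref{localization-depends-on-T}. Parts (2) and (3) are then bookkeeping over the Szemer\'edi decomposition, with (2) translating the lower bound on component sizes into a density lower bound via (1), and (3) splitting the omitted interstitial edge count into the $\epsilon$-regular and irregular contributions controlled by (2) and by $\epsilon$ respectively.
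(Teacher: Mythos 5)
Your proposal is correct and follows essentially the same route as the paper: part (1) is the contrapositive of Observation \ref{k-l-apply} (Key Lemma applied to the dual plus compactness/saturation to pass from arbitrarily large finite empty bipartite pairs to an infinite one), part (2) sets $g((\epsilon,\ell),n) = f(\epsilon, \lfloor n/b_{\epsilon,\ell}\rfloor)$ exactly as the paper does, and part (3) is the same two-term split of the interstitial edge count into irregular and regular contributions. Your observation that the Key Lemma only needs the dual density bounded away from $0$ --- so one does not need a single fixed $\delta$ but merely density bounded above by some $\delta_0<1$ --- is a correct reading of the paper's argument even though Observation \ref{k-l-apply} is stated with a fixed $\delta$.

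The one place your write-up improves on the paper's is the arithmetic in (3). The paper bounds the number of components $k$ above by $\ell' := b_{\epsilon,\ell}$ and the piece size $m$ above by $n/\ell$ \emph{independently}, producing the spurious factor $(\ell'/\ell)^2$ and the resulting requirement $\epsilon + (1-\epsilon)(1-\delta) < c\,\ell/\ell'$, which is awkward because $\ell'/\ell$ blows up with $\ell$. You instead keep $m\approx n/k$ so that $\epsilon k^2(n/k)^2 = \epsilon n^2$ and $(1-g)\binom{k}{2}(n/k)^2 \lesssim (1-g)n^2/2$: the $k$'s cancel, the requirement becomes $\epsilon + (1-g)/2 < c$, and the choice $\epsilon < c/3$, $1-g<2c/3$ closes the argument without ever mentioning $\ell/\ell'$. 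This is the same idea but cleaner; no gap either way.
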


\begin{proof}
(1) This restates Observation \ref{k-l-apply}. 

(2) The regularity lemma provides a decomposition in which all components are approximately the same size $(\pm 1)$, so the density 
of each $\epsilon$-regular pair will be at least $f(\epsilon, \frac{n}{b_{\epsilon, \ell}})$. 

It remains to prove (3).
For the moment, let $\epsilon, \ell$ be arbitrary and suppose that $|X| > N_{\epsilon, \ell}$. Then 
$|X| = n$ admits an $\epsilon$-regular decomposition into at $k$-many pieces, each of size approximately $m = \frac{n}{k}$,  
where $(\dagger) ~~ \ell \leq k \leq \ell^\prime := b_{\epsilon, \ell}$.

Writing $\delta := g((\epsilon, \ell), \frac{n}{\ell^\prime})$, the contribution of the interstitial edges is at most:
\[  \epsilon k^2 m^2 +  (1-\epsilon)(k)^2 \left(1-\delta\right)m^2  \]
\noindent where the term on the left assumes the irregular pairs are empty, and the term on the right
counts the expected number of interstitial edges missing from the regular pairs. By $(\dagger)$, 
this in turn is bounded by:
\begin{align*} \leq & ~~ \epsilon (\ell^\prime)^2 m^2 +  (1-\epsilon)(\ell^\prime)^2 \left(1-\delta\right)m^2 \\ 
  \leq & ~~ \epsilon (\ell^\prime)^2 \left(\frac{n}{l}\right)^2 +  (1-\epsilon)(\ell^\prime)^2 \left(1-\delta\right) \left(\frac{n}{l}\right)^2 \\
  \leq & ~~ n^2 \left( \frac{\ell^\prime}{\ell} \right)^2 \left(\vrt \epsilon + (1-\epsilon)\left(1-\delta\right) \right) 
\end{align*}

\noindent Thus our claim will hold whenever $\epsilon + (1-\epsilon)(1-\delta) < c \frac{\ell}{\ell^\prime}$. To obtain this,
choose $\epsilon > 0$ as small as desired and $\ell$ as large as desired. Then $\delta$ is monotonically increasing
and approaches $1$ as a function of $n$ by (2), as desired.
\end{proof}

We are now prepared to prove:

\begin{theorem} \label{counting-alpha-2}
When $\vp$ is simple unstable, if there does not exist an infinite empty pair $X, Y \subset P_1$,
then $\alpha(n) < \mathcal{O}(n^2)$.
\end{theorem}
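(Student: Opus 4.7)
The plan is to show that $L := \limsup_{n\to\infty} \alpha(n)/n^2 = 0$; since $\alpha(n) \leq \binom{n}{2}$ we immediately have $L \in [0, 1/2]$, so the task is to derive a self-referential inequality on $L$ that forces it to vanish. The strategy mirrors the one telegraphed in the paragraph preceding the theorem: Lemma \ref{count}(3) will let me Szemer\'edi-decompose any large $X \subset P_1$ so that almost all omitted edges land inside the components, which are much smaller than $|X|$; I can then bootstrap by comparing $\alpha(n)$ to $\alpha$ evaluated at the component sizes.

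Concretely, I would fix a target constant $c > 0$ and invoke Lemma \ref{count}(3) to produce $\epsilon > 0$ and an arbitrarily large $\ell < \omega$ such that for every sufficiently large $n$ and every $X \subset P_1$ with $|X| = n$, every $(\epsilon, \ell)^*$-decomposition $X = X_1 \cup \dots \cup X_k$ omits fewer than $cn^2$ interstitial edges, with $\ell \leq k \leq b_{\epsilon, \ell}$ and $|X_i| \leq \lceil n/k \rceil + 1$. The remaining omitted edges lie inside components and contribute at most $\sum_{i \leq k} \hat{e}(X_i) \leq k \cdot \alpha(\lceil n/k \rceil + 1)$. Using the definition of $L$, for any $\eta > 0$ and all sufficiently large $n$ (so that $\lfloor n/b_{\epsilon,\ell}\rfloor$ exceeds the threshold past which $\alpha(m)/m^2 < L + \eta$), this intra-component term is at most $(L+\eta)\,n^2/k \cdot (1+o(1)) \leq (L+\eta)\,n^2/\ell \cdot (1+o(1))$. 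Combining, $\alpha(n) \leq cn^2 + (L+\eta) n^2/\ell \cdot (1+o(1))$; dividing by $n^2$, taking $\limsup_{n\to\infty}$, and letting $\eta \to 0$, I obtain $L \leq c + L/\ell$, hence $L \leq c\ell/(\ell-1)$. Since $\ell$ can be made as large as I wish (say $\ell > 2$) and $c$ was arbitrary, $L = 0$.

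The main obstacle is the uniformity issue: both $k$ and the component sizes $|X_i|$ depend on the choice of $X$ and not only on $n$. The saving grace is that once $\epsilon$ and $\ell$ are fixed, $k$ is confined to the bounded interval $[\ell, b_{\epsilon,\ell}]$, so all component sizes lie in a bounded band around $n/\ell$ and in particular go to infinity with $n$; this is what lets me apply the approximation $\alpha(m) \leq (L+\eta)m^2$ uniformly to every component that arises. A minor point worth verifying along the way is simply that Lemma \ref{count}(3) is available in this setting, which requires precisely the hypothesis that $P_1$ contains no infinite empty pair.
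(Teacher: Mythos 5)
Your proof is correct, and its essential engine—Lemma \ref{count}(3) pushing the bulk of omitted edges into components of size roughly $n/\ell$—is the same as the paper's. Where you diverge is in how the bootstrap is executed. The paper recurses \emph{explicitly}: it decomposes $t$ times, keeps track of the component sizes $n/(k_i)^i$ at depth $i$, bounds the final components trivially, and then sums the resulting geometric series $\sum_i c/\ell^i + 1/\ell^t$, choosing $t$ and the parameters at the outset so this is less than a target $c_0$. You instead perform a single decomposition, let $L := \limsup_n \alpha(n)/n^2$ absorb the recursion, and derive the self-referential bound $L \le c + L/\ell$, whence $L \le c\ell/(\ell-1)$ and $L=0$. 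Your version is cleaner bookkeeping (no nested indices $k_1,\dots,k_t$, no threshold like $n \gg N_{\epsilon,\ell}\,(\ell')^t$ to track), at the price of using the a priori finiteness of $L$ (which you correctly justify via $\alpha(n)\le\binom{n}{2}$) and the monotonicity of $\alpha$—both of which you rely on implicitly and should perhaps state, since they are what make the ``uniform over all components'' step legitimate. One small correction: the regularity lemma gives $\bigl||X_i|-|X_j|\bigr|\le 1$, so $|X_i|\le\lceil n/k\rceil$; your extra $+1$ is harmless but unnecessary. Also note that you do not actually need $\ell$ large: $\ell\ge 2$ already gives $L\le 2c$, and then $c\to 0$ suffices. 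You are right that the only place the no-infinite-empty-pair hypothesis enters is in licensing Lemma \ref{count}(3), exactly as in the paper.
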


\begin{proof}
Given a positive real constant $c_0 > 0$, choose $c, k, t$ such that $0 < c < 1$, $k, t \in \mathbb{N}$ and 
$c_0 > 2c + \frac{1}{k^t}$. 
Fix a pair $(\epsilon, \ell)$ such that $\ell > k$ and $(\epsilon, \ell)$ is one of the cofinally many pairs
described in Lemma \ref{count}(3) for the constant $c$. 
Now, for $n$ sufficiently large and any $|X| = n$, 
each of the components in an $(\epsilon, \ell)^*$-decomposition of $X$ will have size $> N_{\epsilon, \ell}$ 
so will themselves admit an $(\epsilon, \ell)^*$-decomposition with few interstitial edges. Repeating this argument to an arbitrary depth 
we can confirm that the bulk of the ostensibly missing edges must continually vanish inside the 
(eventually) relatively much smaller components at each successive decomposition.

More precisely, let $\ell^{\prime} := b_{\epsilon, \ell}$ and suppose $n >> (N_{\epsilon, \ell})(\ell^{\prime})^t$.  
By repeated application of 
Lemma \ref{count}(3) to each successive decomposition, we obtain the following upper bound on $\alpha(n)$,
where $\ell \leq k_i \leq \ell^\prime$ for each $1 \leq i \leq t$.
The rightmost term assumes that after $t-1$ levels of decomposition we obtain components which are themselves empty graphs. 
\begin{align*}
\alpha(n) ~~ < & ~~ cn^2 + c\left(\frac{n}{k_1}\right)^2 k_1 + c\left(\frac{n}{(k_2)^2}\right)^2 (k_2)^2 + \\
 & \hspace{30mm} \dots + c\left(\frac{n}{(k_{t-1})^{t-1}}\right)^2 (k_{t-1})^{t-1} + \left(\frac{n}{(k_t)^t}\right)^2 (k_t)^t \\
 < & ~~ cn^2\left( 1 + \frac{1}{k_1} + \frac{1}{(k_2)^2} + \dots + \frac{1}{(k_{t-1})^{t-1}} \right) + \left(\frac{n^2}{(k_t)^t}\right) \\
 < & ~~ cn^2\left( 1 + \frac{1}{\ell} + \frac{1}{\ell^2} + \dots + \frac{1}{\ell^{t-1}} \right) + \left(\frac{n^2}{\ell^t}\right) \\
 < & ~~ n^2 \left( \frac{\ell c}{\ell-1} + \frac{1}{\ell^t} \right) \\
 < & ~~ \left(2c + \frac{1}{\ell^t}\right) n^2 ~~ < ~~ c_0 n^2 \\
\end{align*}
\noindent by summing the convergent series. We have shown that for any constant $c_0$, for all $n$ sufficiently large
$\alpha(n) < c_0 n^2$, so we finish.
\end{proof}

\begin{proof} (of Proposition \ref{counting-thm})
This is now an immediate corollary of Theorem \ref{counting-alpha-2}, 
$\frac{n^2}{4}$ being the number of edges omitted in an empty pair.
\end{proof}

\begin{rmk}
Theorem \ref{counting-alpha-2}, and thus Proposition \ref{counting-thm}, are more natural than might appear. 
On one hand, as Szemer\'edi regularity deals with density, it cannot (in this formulation) give precise information about edge counts below $\mathcal{O}(n^2)$. On the other, the random graph contains many infinite empty pairs, 
for instance $(\{ (a,z) : z \in M, z \neq a \}, \{ (y,a) : y \in M, y \neq a \})$ when $\vp = xRy \land \neg xRz$. 
One could imagine a future use for such theorems in suggesting ways of decomposing the parameter spaces of simple formulas into
parts whose structure resembles random graphs (with many overlapping empty pairs) and
parts whose structure is more cohesive, indicated by $\alpha(n) < \mathcal{O}(n^2)$.
\end{rmk}

\section{Order and genericity} \label{section:order-genericity}

In this section we show that $P_1$, after localization, 
admits arbitrarily large $\epsilon$-regular pairs of some fixed density bounded away from $0$ and $1$
precisely when $P_2$, after localization, is unstable. Compare Conclusion \ref{ps-stable}.

\begin{obs} \label{slice-order}
Suppose that for some $0< \delta < 1$ and for all $\epsilon, n$ with $0<\epsilon<1$, 
$n \in \mathbb{N}$ we have a bipartite $R$-graph $(X,Y)$, $|X| = |Y| \geq n$, such that
$(X,Y)$ is $\epsilon$-regular with density $d$, where $|d - \delta| < \epsilon$. Then 
$R$ has the order property. 
\end{obs}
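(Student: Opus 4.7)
The plan is to witness the order property of $R$ by constructing, for every $n<\omega$, a length-$n$ half graph, i.e.\ vertices $a_1,\dots,a_n,b_1,\dots,b_n$ with $R(a_i,b_j) \iff i < j$. The existence of such finite configurations of arbitrary length is equivalent to the order property of $R$ via compactness.

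Fix $n$ and choose $\epsilon > 0$ small enough in terms of $n$ and $\min(\delta,1-\delta)$. By hypothesis we obtain an $\epsilon$-regular bipartite pair $(X,Y)$ with $|X|=|Y|$ as large as needed and density $d$ satisfying $|d-\delta|<\epsilon$. The construction builds $a_k,b_k$ alternately while maintaining nested subsets
\[
X^k := \{\, a \in X : \neg R(a,b_i) \text{ for all } i\leq k\,\}, \qquad Y^k := \{\, b \in Y : R(a_i,b) \text{ for all } i\leq k\,\}.
\]
At step $k$, pick $a_k \in X^{k-1}$ of ``typical'' degree into $Y^{k-1}$ (within $\epsilon|Y^{k-1}|$ of $d|Y^{k-1}|$), then pick $b_k \in Y^{k-1}\setminus N(a_k)$ of typical non-degree into $X^{k-1}$. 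These choices enforce the required pattern: for $i<j$, $b_j \in Y^{j-1}$ is a common neighbor of $a_1,\dots,a_{j-1}$, so $R(a_i,b_j)$; for $i\geq j$, $a_i \in X^{i-1}$ is a common non-neighbor of $b_1,\dots,b_{i-1}$, and $b_i \notin N(a_i)$ by construction, so $\neg R(a_i,b_j)$.

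The feasibility of each inductive step rests on a standard consequence of $\epsilon$-regularity: whenever $Y' \subset Y$ satisfies $|Y'|\geq \epsilon|Y|$, at most $2\epsilon|X|$ vertices of $X$ have degree in $Y'$ deviating from $d|Y'|$ by more than $\epsilon|Y'|$, since a larger such set would itself violate regularity. So typical $a_k$ abound inside $X^{k-1}$ as soon as $|X^{k-1}|$ and $|Y^{k-1}|$ stay comfortably above the $\epsilon$-threshold, and the symmetric argument delivers $b_k$. The main obstacle is precisely this size bookkeeping: each step shrinks $X^k$ and $Y^k$ by factors of roughly $1-d$ and $d$ respectively, so after $n$ steps they have shrunk by a factor of $\min(\delta,1-\delta)^n$ up to $O(\epsilon)$ error---forcing the initial $\epsilon$ to be exponentially small in $n$, which is harmless since the hypothesis grants $\epsilon$ arbitrarily small together with $|X|$ arbitrarily large. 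With the length-$n$ half graph in hand for every $n$, compactness yields an infinite half graph and $R$ has the order property.
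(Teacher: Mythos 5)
Your proof is correct, and it takes a genuinely different route from the paper's. The paper re-applies Szemer\'edi's Regularity Lemma to the single $\epsilon$-regular pair $(X,Y)$, decomposing $X$ and $Y$ into $k_0$ pieces each; it then uses the $\epsilon$-regularity of $(X,Y)$ (with $\epsilon < 1/k_0$) to argue that every regular cross-pair $(X_i,Y_j)$ inherits a density within $2\epsilon$ of $\delta$, and finally invokes the Key Lemma (Theorem~\ref{key-lemma}, via the two-sided refinement of Remark~\ref{reft}) to embed the half-graph pattern. You instead carry out a direct greedy embedding inside the single regular pair, alternately selecting vertices of typical degree and typical non-degree and shrinking the candidate sets; this is essentially a hand-rolled proof of the relevant case of the Key Lemma, and it needs $\epsilon$ to be exponentially small in the target length $n$, which the hypothesis happily supplies. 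The tradeoff: the paper's argument is shorter once the Regularity and Key Lemmas are taken as black boxes, while yours is more elementary and self-contained, making visible the role of the density being bounded away from both $0$ and $1$ (both factors $d-\epsilon$ and $1-d-\epsilon$ must stay positive so that $Y^k$ and $X^k$ remain above the $\epsilon$-fraction threshold). One bookkeeping point worth making explicit if you write this up carefully: the ``typical-degree'' count must be taken relative to the shrunken sets $X^{k-1}$, $Y^{k-1}$ (each of which must exceed $\epsilon|X|$, $\epsilon|Y|$ respectively for the regularity estimate to apply), and the set $Y^{k-1}\setminus N(a_k)$ from which $b_k$ is drawn must itself exceed $2\epsilon|Y|$ so that a typical $b_k$ exists; all of this holds once $\epsilon \lesssim \min(\delta,1-\delta)^{n}$, as you note.
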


\begin{proof}
It suffices to show that for arbitrarily small $\epsilon_0$ 
and arbitrarily large $k_0$ there is a Szemer\'edi-regular decomposition of $X$ and of $Y$ into $k_0$ pieces
such that all but $k_0 (\epsilon_0)^2$ of the pairs $X_i$, $Y_i$ are $\epsilon_0$-regular with density near $\delta$.
This is because we can apply the Key Lemma (in light of Remark \ref{reft}) to conclude that any pattern which 
appears in the reduced graph corresponding to these components, in particular some given fragment of the order property, 
actually occurs in $X,Y$. The subtlety is to ensure that the densities of the regular pairs 
are all approximately the same. 

Given $\epsilon_0$, $k$, let $k_0$, $N_0$ be the number of components and threshold size, respectively, given by the regularity lemma. Choose $\epsilon$ so that $\frac{1}{k_0} > \epsilon$ and $n > N_0$. Let $(X,Y)$ be the $\epsilon$-regular pair of size at least $n$ and density near $\delta$, given by hypothesis. 

By regularity, $n > N_0$ means that there is a decomposition $X = \cup_{i\leq k_0} X_i$,
$Y = \cup_{i\leq k_0} Y_i$ into disjoint pieces of near equal size and that all but 
$\epsilon_0(k_0)^2$ of the pairs $(X_i, Y_j)$ are $\epsilon_0$-regular. 
However any one of these regular pairs $(X_i, Y_j)$ will satisfy $|X_i|, |Y_j| = n/k_0 > \epsilon n$, 
so $|d(X_i, Y_j) - d(X,Y)| = |d(X_i, Y_j) - \delta \pm \epsilon| < \epsilon$
and $|d(X_i, Y_j) - \delta | < 2\epsilon$, as desired. 
\end{proof}

Recall that an equivalent definition of $SOP$ is that there exists an indiscernible sequence
$\langle a_i : i<\omega \rangle$ on which $\exists x (\neg \vp(x;a_j) \land \vp(x;a_i)) \iff j<i$. 
The main step in Shelah's classic proof that any unstable theory which does not have the independence property
must have the strict order property can be characterized as follows:

\begin{thm-lit} \emph{(Shelah)} \label{threaten-order}
Let $c$ be a finite set of parameters and $\langle a_i : i<\omega \rangle$ a $c$-indiscernible sequence. 
For $n<\omega$, any formula $\theta(x;\overline{z})$ and relations $R(x;y)$, $R_1,\dots R_n$ 
where $\ell(y) = \ell(a_i)$ and $R_i \in \{ R(x;y), \neg R(x;y) \}$ for $i\leq n$, if
\[ i_1 < \dots < i_n \implies \exists x \left( \vrt \theta(x;c) \land R_1(x;a_{i_1}) \land \dots \land R_n(x;a_{i_n}) \right) \]
then either
\begin{itemize}
\item $\exists x \left( \vrt\theta(x;c) \land R_1(x;a_{i_{\sigma(1)}}) \land \dots \land R_n(x;a_{i_{\sigma(n)}}) \right)$ for any 
permutation $\sigma : n \rightarrow n$
\item some formula of $T$ has the strict order property. 
\end{itemize}
\end{thm-lit}

The idea is to express the permutation $\sigma$ as a sequence of swaps of successive elements (in the sense of the order $<$),
and use the first instance, if any, where the swap produces inconsistency to obtain a sequence witnessing strict order. 
For details, see \cite{Sh:c}, Theorem II.4.7, pps. 70--72.

The subtlety in the corollary is to obtain not just the independence property but a bipartite random graph. 

\begin{cor} \label{op-to-ip-2}
Suppose that $R(x;y)$ has the order property. If $T$ does not have the strict order property, 
then there exist infinite disjoint sets $A, B$ on which $R$ is a bipartite random graph (i.e., 
$R(x;y)$ is $I^2_2$ in the sense of Definition \ref{constellations} below).
\end{cor}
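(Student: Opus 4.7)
The plan is to first derive the independence property for $R(x;y)$ via Theorem~\ref{threaten-order} (using that $T$ has no $SOP$), and then upgrade it to a bipartite random graph by compactness.

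By the order property for $R$ and a standard Ramsey extraction, obtain an $\emptyset$-indiscernible sequence $\langle a_i : i<\omega \rangle$ together with witnesses $\langle b_i : i<\omega \rangle$ satisfying $R(b_i, a_j) \leftrightarrow i<j$. Fix $n<\omega$ and $0 \leq k \leq n$. For any increasing $i_1 < \cdots < i_n$, the element $b_{i_k}$ realizes on $(a_{i_1}, \dots, a_{i_n})$ the ordered Boolean pattern with $k$ initial negations followed by $n-k$ positive literals, so this particular ordered pattern is consistent. Apply Theorem~\ref{threaten-order} with $\theta \equiv \top$ and $c = \emptyset$: since $T$ lacks $SOP$, every permutation of this pattern is also consistent. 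As $k$ varies over $\{0,\dots,n\}$, the resulting permutations collectively cover all $2^n$ Boolean combinations of the atomic formulas $R(x;a_{i_1}), \dots, R(x;a_{i_n})$. Thus for every finite $S \subseteq \omega$ and every $\sigma : S \to \{0,1\}$, the formula $\bigwedge_{i \in S} R(x; a_i)^{\sigma(i)}$ is consistent, i.e. $R(x;y)$ has the independence property witnessed along $\langle a_i \rangle$.

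For the upgrade, fix a countable bipartite random graph $\Gamma$ on two disjoint vertex sets with edge function $\epsilon : \omega \times \omega \to \{0,1\}$, and consider the partial type $p(\langle c_i \rangle, \langle d_j \rangle)$ asserting pairwise distinctness of all $c_i$ and $d_j$ together with $R(c_i; d_j)^{\epsilon(i,j)}$ for every $i, j < \omega$. To verify finite satisfiability, take any finite fragment involving indices $i, j \leq N$; set $d_j := a_j$ from the indiscernible sequence above, and for each row $i \leq N$ use the independence property from the previous paragraph to choose $c_i$ realizing the prescribed pattern $j \mapsto \epsilon(i,j)$ on $(a_0, \dots, a_N)$. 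Distinctness of the $c_i$ from each other and from the $d_j$ is arranged by saturation. Hence $p$ is consistent, and any realization in a sufficiently saturated model yields infinite disjoint $A = \{c_i\}$ and $B = \{d_j\}$ on which $R$ induces a copy of $\Gamma$, giving $I^2_2$.

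The main obstacle is the extraction of full $IP$ in the first step: the point is that Theorem~\ref{threaten-order} converts a single consistent ordered Boolean pattern into all its permutations, and sweeping $k$ over the number of negations yields every possible Boolean combination. Once full $IP$ is established, the compactness step producing the random bipartite graph is routine, because the rows of a finite target pattern can be realized independently, so no back-and-forth construction or appeal to symmetry of $R$ is required.
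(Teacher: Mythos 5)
Your proof is correct and follows essentially the same route as the paper's. Both arguments use Theorem~\ref{threaten-order} to convert a single consistent ordered $R$-pattern on the indiscernible parameter sequence into consistency of every permutation (hence, varying the number of negations, every Boolean pattern), and then realize a countable bipartite random-graph template row by row via compactness; the only cosmetic difference is that you isolate ``$R$ has IP along $\langle a_i\rangle$'' as an explicit intermediate step, whereas the paper goes directly to realizing each row type $p_i(x)\in S(B)$.
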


\begin{proof}
We first fix a template. Let $M$ be a countable model of the theory of a bipartite random graph with two
sorts $P,Q$ and a single binary edge relation $E(x;y)$ with $E(x;y) \implies P(x) \land Q(y)$. 
Let $\langle x_i : i<\omega \rangle$, $\langle y_i : i<\omega \rangle$ be an enumeration of $P$ and $Q$,
respectively. 

Now let $\langle a_i b_i : i<\omega \rangle$ be an indiscernible sequence on which $R$ has the order property, i.e.
$R(a_i, b_j) \iff i<j$. Suppose that for every $i<\omega$ we could find an element $c_i$ such that
for all $j<\omega$, $R(c_i, b_j) \iff E(x_i, y_j)$ in the template. Then setting $C := \bigcup_{i<\omega} c_i$,
$B := \bigcup_{j<\omega} b_j$, $(C,B)$ is a bipartite random graph for $R$. 

So it remains to show that any finite subset $p$ of the type $p_i(x) \in S(B)$ of any such $c_i$ is consistent. 
Let $\eta, \nu$ be disjoint finite subsets of $\omega$, 
and let $p(x) = \bigwedge_{j \in \eta} R(x;b_j) \land \bigwedge_{k \in \nu} \neg R(x;b_k)$.  
We are now in a position to apply Theorem \ref{threaten-order}; as $T$ is $NSOP$, $p(x)$ must be consistent. 
\end{proof}

\begin{defn} Fix a binary edge relation $R$. 
Call a density $0 \leq \delta \leq 1$ \emph{attainable} if for all $\epsilon$
there exists a sequence $\langle S^\delta_{\epsilon} = \langle (X_i, Y_i) : i<\omega \rangle$
of finite bipartite $R$-graphs such that
for all $n<\omega, \epsilon > 0$ there is $N < \omega$ such that for all $i > N$, 

\begin{itemize}
\item $|X_i| = |Y_i| \geq n$,
\item $(X_i, Y_i)$ is $\epsilon$-regular with density $d_i$, where
$|d_i - \delta| <\epsilon$.
\end{itemize}
\end{defn}

\begin{concl} \label{excluded-middle}
Assume $T$ does not have the strict order property. Then
the following are equivalent for a binary relation $R(x,y)$:

\begin{enumerate}
\item For some $0 < \delta < 1$ and for all $N, \epsilon$ there exist
disjoint $X, Y$ with $|X| = |Y| \geq N$ such that $(X,Y)$ is $\epsilon$-regular
with density $d$, $|d - \delta| < \epsilon$.

\item For any attainable $0 < \delta < 1$ such that for all $N, \epsilon$ there exist
disjoint $X, Y$ with $|X| = |Y| \geq N$ such that $(X,Y)$ is $\epsilon$-regular
with density $d$, $|d - \delta| < \epsilon$.

\item $R$ has the order property. 

\end{enumerate}
\end{concl}

\begin{proof}
(2) $\rightarrow$ (1) Attainable densities exist, e.g. $\frac{1}{2}$: consider subgraphs of an infinite random bipartite 
graph. 

(1) $\rightarrow$ (3) Observation \ref{slice-order}. 

(3) $\rightarrow$ (1) Corollary \ref{op-to-ip-2}, which says that from (3), assuming $NSOP$,
we can construct an infinite random bipartite graph with edge relation $R$.
\end{proof}

In other words, regularity plus compactness implies that density bounded away from $0,1$ gives any bipartite
configuration including the order property, and model theory implies that the order property is enough to reverse the argument. 

\begin{cor} \label{c-excluded-middle}
Assume $T$ does not have the strict order property, and $(T,\vp) \mapsto \langle P_n \rangle$. 
Then the following are equivalent:

\begin{enumerate}
\item After localization, $P_2$ does not have the order property.

\item After localization, the density of any sufficiently large 
$P_2$-regular pair $(X,Y)$ must approach either 0 or 1. More precisely,
there exists $f: \mathbb{N} \times (0,1) \rightarrow [0,\frac{1}{2}]$ 
monotonic increasing as $n \rightarrow \infty$, $\epsilon \rightarrow 0$ such that 
if $X, Y \subset P_1$, $|X|, |Y| \geq n$ and $(X,Y)$ is $\epsilon$-regular, then either
$d(X,Y) < f(n,\epsilon)$ or $d(X,Y) > 1-f(n,\epsilon)$.  
\end{enumerate}
\end{cor}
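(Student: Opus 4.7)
The plan is to reduce Corollary \ref{c-excluded-middle} to a direct application of Conclusion \ref{excluded-middle}, taking the binary relation $R$ to be $P_2$ with both variables constrained to lie in a fixed localization of $P_1$. Because $P_2$ depends only on the formula $\vp$ and is insensitive to the choice of localization of $P_1$, once we restrict attention to the localized universe the hypotheses and conclusions of \ref{excluded-middle} transfer directly; the remaining work is to translate between the non-existence of attainable middle densities and the existence of the monotone function $f$ asked for in (2).

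For the direction (2) $\Rightarrow$ (1), I would argue contrapositively. Suppose $P_2$ has the order property after localization. Then Corollary \ref{op-to-ip-2}, applied under the standing hypothesis that $T$ is NSOP, supplies infinite disjoint $A, B \subset P_1$ on which $P_2$ is a bipartite random graph. Standard facts about random bipartite graphs then yield, for every $n$ and every $\epsilon>0$, finite $X \subset A$, $Y \subset B$ with $|X| = |Y| \geq n$ such that $(X,Y)$ is $\epsilon$-regular of density arbitrarily close to $\tfrac{1}{2}$. Any candidate $f \leq \tfrac{1}{2}$ would then have to satisfy $f(n,\epsilon) > \tfrac{1}{2} - \epsilon$ on these parameters in order to avoid forbidding a density near $\tfrac{1}{2}$, which prevents the necessary asymptotic tightening of $f$ toward $0$; this contradicts (2).

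For (1) $\Rightarrow$ (2), suppose after localization that $P_2$ does not have the order property. Applying Conclusion \ref{excluded-middle} pointwise to $R = P_2$, for each $\delta \in (0,1)$ there exist thresholds $N_\delta, \epsilon_\delta$ such that no $\epsilon_\delta$-regular pair $(X,Y) \subset P_1$ of size at least $N_\delta$ has density within $\epsilon_\delta$ of $\delta$. The main technical step is to promote this pointwise data to a uniform function. Given $\alpha \in (0, \tfrac{1}{2})$, the interval $[\alpha, 1-\alpha]$ is compact, so the open cover $\{(\delta - \epsilon_\delta, \delta + \epsilon_\delta) : \delta \in [\alpha, 1-\alpha]\}$ admits a finite subcover. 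Taking the maximum of the corresponding $N_\delta$ and the minimum of the corresponding $\epsilon_\delta$ yields a uniform pair $(N_\alpha, \epsilon_\alpha)$ for which no $\epsilon_\alpha$-regular pair of size $\geq N_\alpha$ has density anywhere in $[\alpha, 1-\alpha]$. Letting $\alpha$ tend to $0$ along a sequence and patching the uniform thresholds then defines $f(n,\epsilon)$ with the desired monotone behavior.

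The main obstacle is this compactness step: one must convert a continuum of pointwise constraints (one per $\delta$) into a single monotone function with the right asymptotic behavior. A subsidiary point that needs checking is that the random bipartite graph extracted in the proof of Corollary \ref{op-to-ip-2} can be arranged to lie entirely inside the localized $P_1$. This is routine, since the indiscernible sequence witnessing the order property of $P_2$ already lies in the localization, and the type defining the ``random'' side can be enlarged by the definable condition restricting to the localization without affecting the consistency argument from Theorem \ref{threaten-order}.
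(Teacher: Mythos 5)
Your proof is correct and takes the route the paper intends: Corollary \ref{c-excluded-middle} is stated without a separate proof, as an immediate consequence of Conclusion \ref{excluded-middle} applied to $R = P_2$ inside a fixed localization, and your reduction together with the compactness argument uniformizing the pointwise thresholds $(N_\delta, \epsilon_\delta)$ over $[\alpha, 1-\alpha]$ into the single monotone $f$ supplies exactly the omitted details. The subsidiary point you flag --- that the bipartite random graph produced by Corollary \ref{op-to-ip-2} can be confined to the localization --- is the right thing to check, and is indeed routine since the witnessing indiscernible sequence already lies there.
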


\begin{cor} If $T$ is simple, then any characteristic sequence associated to one of its formulas
satisfies the equivalent conditions of Corollary \ref{c-excluded-middle}. 
\end{cor}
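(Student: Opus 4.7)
The plan is to observe that the corollary is essentially a repackaging of two results already in hand: the classical fact that simple theories are $NSOP$, and Conclusion \ref{ps-stable} quoted from \cite{mm-article-2}, which guarantees that $P_2$ loses the order property after localization whenever $T$ is simple.

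More concretely, I would proceed as follows. First, since $T$ is simple, no formula of $T$ has the strict order property; in particular $T$ is $NSOP$, so the hypothesis of Corollary \ref{c-excluded-middle} is satisfied. Next, applying Conclusion \ref{ps-stable} with $n=2$ (and with the trivial partition of $y_1, y_2$ into object variables) tells us that after a suitable localization the formula $P_2(y_1, y_2)$ does not have the order property. This establishes clause (1) of Corollary \ref{c-excluded-middle}. Invoking the equivalence proved in that corollary, we conclude that clause (2) holds as well: any sufficiently large $\epsilon$-regular pair $(X,Y) \subset P_1$ in $P_2$ has density tending to either $0$ or $1$ as $n \to \infty, \epsilon \to 0$.

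There is no real obstacle here, since the substance of the argument has already been carried out in Corollary \ref{c-excluded-middle} (which in turn rests on Observation \ref{slice-order} and Corollary \ref{op-to-ip-2}) and in Conclusion \ref{ps-stable}. The only point deserving mention is that the localization provided by Conclusion \ref{ps-stable} may be taken to include any prescribed positive base set, so that the statement makes sense uniformly for all characteristic sequences of formulas of $T$.
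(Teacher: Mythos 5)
Your proof is correct and follows the same route as the paper, which simply cites Conclusion \ref{ps-stable}. You usefully make explicit the preliminary check that simple theories are $NSOP$, which is needed for Corollary \ref{c-excluded-middle} to apply and is left implicit in the paper's one-line proof.
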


\begin{proof}
Conclusion \ref{ps-stable}.
\end{proof}

\begin{rmk} The class of theories satisfying the equivalent conditions of 
Corollary \ref{c-excluded-middle} strictly contains the simple theories. 
Example 3.6 of \cite{mm-article-2} gives a formula with the tree property whose $P_2$ does not have the order property.
This is essentially $T^*_{feq}$ from \cite{ShUs}; basic examples of $TP_2$ will work. 
\end{rmk}

\begin{rmk}
Any formula with $SOP_2$, also called $TP_1$, has the order property in $P_2$. For $SOP_2$, see \cite{ShUs}.
However, the next section suggests that more precise order properties may be useful.  
\end{rmk}

\section{Two kinds of order property} \label{section:two-order-properties}

Towards understanding the role of instability in the characteristic sequence,
this section considers two polar opposite order properties and their implications for $P_2$. 

\begin{defn} \label{two-op-defn}
\emph{(Two kinds of order property)} Let $\langle P_n \rangle$ be the characteristic sequence of $\vp$.
\begin{enumerate}
\item $\vp$ has the \emph{$n$-compatible order property}, for some $n<\omega$ (or $n=\infty$) if
there exist $\langle a_i, b_i : i<\omega \rangle$
such that for all $m\leq n$ (or $m<\omega$), $P_{2m}(a_{i_1}, b_{j_1},\dots a_{i_m}, b_{j_m})$ iff 
$\operatorname{max} \{ i_1,\dots i_m \} < \operatorname{min} \{ j_1, \dots j_m \}$.
\item[(1)$^\prime$] When the sequence has support $2$ this becomes:
there exist $\langle a_i, b_i : i<\omega \rangle$ such that $P_2(a_i,a_j)$, $P_2(b_i, b_j)$
for all $i,j$ and $P_2(a_i, b_j)$ iff $i<j$. 

\item $\vp$ has the \emph{$n$-empty order property}, for some $n \in \omega$, if:
\\ there exist $\langle a_i, b_i : i<\omega \rangle$
such that (i) $P_2(a_i;b_j)$ iff $i < j$ and (ii) $\neg P_n(a_{i_1},\dots a_{i_n})$, $\neg P_n(b_{i_1},\dots b_{i_n})$ hold 
for all $i_1,\dots i_n <\omega$.  
\end{enumerate}
\end{defn}

Let us briefly justify not focusing on a natural third possibility, the ``semi-compatible order property,''
in which the elements $\langle a_i : i<\omega \rangle$ are an empty graph and the elements $\langle b_i : i<\omega \rangle$
are a positive base set.  

\begin{obs}
There is a formula in the random graph which has the semi-compatible order property. 
\end{obs}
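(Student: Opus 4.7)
My plan is to exhibit an explicit formula in $T_{\mathrm{rg}}$, the theory of the random graph, whose characteristic sequence realizes a semi-compatible configuration. I propose the formula
\[
\vp(x; y_1, y_2, y_3) \;=\; \bigl(R(x, y_1) \wedge \neg R(x, y_2)\bigr) \,\vee\, (x = y_3),
\]
whose disjunctive structure is the key feature: setting $y_1 = y_2$ collapses the first disjunct to a contradiction, so $\vp$ reduces to the equation $x = y_3$ (a unique witness), while leaving $y_1 \neq y_2$ lets the first disjunct play the role of the canonical IP-witness in the random graph. This decoupling will let me handle the pairwise $P_2$-inconsistency of the $a_i$'s and the ordered interaction with the $b_j$'s independently.

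In a sufficiently saturated $M \models T_{\mathrm{rg}}$, I would first construct the positive base set. Fix a vertex $c$, distinct vertices $d_1, d_2, \ldots$, and a witness $x^*$ with $R(x^*, d_j)$ for all $j$ and $\neg R(x^*, c)$, together with fresh distinct $\gamma'_1, \gamma'_2, \ldots$; set $b_j := (d_j, c, \gamma'_j)$. The first disjunct of each $\vp(x; b_j)$ is satisfied by $x^*$ simultaneously, so $\{b_j\}$ is a positive base set.

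For the $a_i$'s, pick distinct vertices $\gamma_i \in M$ (disjoint from the $\gamma'_j$'s) satisfying $\neg R(\gamma_i, c)$ and $R(\gamma_i, d_j) \iff i < j$; this last condition is simply the order property of $R$ itself, realized in any sufficiently saturated random graph. Fix any $\alpha_i \in M$ and set $a_i := (\alpha_i, \alpha_i, \gamma_i)$. Because $y_1 = y_2 = \alpha_i$, the first disjunct of $\vp(x; a_i)$ becomes $R(x,\alpha_i) \wedge \neg R(x,\alpha_i)$, which is inconsistent, so $\vp(x; a_i) \equiv (x = \gamma_i)$.

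Verification then falls out: $\vp(x; a_i) \wedge \vp(x; a_j) \equiv (x = \gamma_i = \gamma_j)$ is inconsistent for distinct $\gamma_i, \gamma_j$, so $\{a_i\}$ is an empty graph; and substituting $x = \gamma_i$ into $\vp(x; b_j)$ kills the $x = \gamma'_j$ disjunct (by the disjointness of the $\gamma$'s and $\gamma'$'s) and reduces the other disjunct to $R(\gamma_i, d_j) \wedge \neg R(\gamma_i, c) \equiv R(\gamma_i, d_j)$, which holds iff $i < j$. The main obstacle is merely technical, namely that all the required disjointness and edge conditions on the chosen vertices can be realized simultaneously; this is handled by a single compactness argument using the universal property of the random graph.
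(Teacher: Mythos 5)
Your proof is correct and rests on the same core idea as the paper's: build a formula that case-splits, so that on the $a_i$'s it collapses to an equation $x = (\text{singleton})$ (giving a $P_2$-empty graph) and on the $b_j$'s it reduces to an edge condition in the random graph (giving a $P_\infty$-complete graph and the order property via $P_2(a_i,b_j) \iff i<j$). The paper implements the case-split with a flag parameter $z \in \{0,1\}$, defining $\psi(x;y,z)$ to be $x=y$ if $z=0$ and $xRy$ otherwise, and remarks that the distinguished elements $0,1$ ``can be coded without parameters.'' You instead use $\vp(x;y_1,y_2,y_3) = (R(x,y_1)\land\neg R(x,y_2))\lor (x=y_3)$ and realize the case-split by the choice of parameters: putting $y_1=y_2$ kills the first disjunct, so $\vp(x;a_i) \equiv (x=\gamma_i)$, while putting $y_1 \neq y_2$ leaves the edge-condition branch live. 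This buys you a fully explicit, parameter-free formula in the pure language of graphs, with no appeal to coding distinguished constants in a homogeneous structure where none are definable — a genuine simplification of the paper's argument, at the modest cost of a longer tuple of parameter variables. Your verification steps (positivity of $\{b_j\}$ via a common witness $x^*$, emptiness of $\{a_i\}$ from the uniqueness of $\gamma_i$, and the order condition $R(\gamma_i,d_j)\iff i<j$ after the $\gamma'$-disjunct is killed by disjointness) are all correct, and the closing appeal to saturation/the extension property of the random graph to realize the finitely many disjointness and edge constraints is standard and sound.
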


\begin{proof}
Choose two distinguished elements $0$, $1$ (this can be coded without parameters).
Define $\psi(x;y,z)$ to be $x=y$ if $z=0$, $xRy$ otherwise. Then on any sequence of distinct elements
$\langle a_i b_i : i<\omega \rangle \subset M$ which witness the order property ($a_i R b_j \iff i<j$), 
we have additionally that
\[ \exists x \left(\svrt \psi(x;a_i,0) \land \psi(x;b_j,1)\right) \iff \exists x \left(\svrt x=a_i \land x R b_j\right) \iff i<j \]
so $P_2$ has the order property on the sequence $\langle (a_i,0), (b_i,1) : i<\omega \rangle$.
On the other hand, $\exists x (x=a_i \land x=a_j) \iff i=j$, so the row of elements $(a_i,0)$ is a $P_2$-empty graph.
Finally, $\exists x (xRb_i \land xRb_j)$ always holds, by the axioms of the random graph; so the row of elements
$(b_j, 1)$ is a $P_\infty$-complete graph.  
\end{proof}

\begin{obs}
There is a formula in a simple rank 3 theory which has the $2$-empty order property.
\end{obs}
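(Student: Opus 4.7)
The plan is to exhibit an explicit example. The $2$-empty order property demands a formula $\vp(x;\bar y)$ and parameters $\langle a_i, b_i : i<\omega\rangle$ with (a) $\vp(x;a_i)\land\vp(x;a_j)$ and $\vp(x;b_i)\land\vp(x;b_j)$ inconsistent for $i\neq j$, yet (b) $\vp(x;a_i)\land\vp(x;b_j)$ consistent iff $i<j$. Thus each $\vp(x;a)$ must pin $x$ to a small $a$-definable (algebraic) subset of the home sort, and the cross-row consistency pattern records which of these small sets intersect, producing the order property across the two ``columns'' of parameters.

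The model I propose is a two-sorted Fra\"iss\'e--Hrushovski generic $T$ with sorts $X$ and $A$, projection functions $\pi_1,\pi_2 : X \to A$ whose $(\pi_1,\pi_2)$-fibers over $A\times A$ are infinite and generic, and a symmetric random binary relation $R$ on $A$. Take
\[
\vp(x; y, s) \;:=\; (s=1 \land \pi_1(x)=y) \;\lor\; (s=2 \land \pi_2(x)=y),
\]
and set $a_i := (a_i',1)$, $b_j := (b_j',2)$ with $a_i', b_j' \in A$. Then (a) reduces to requiring the $a_i'$'s (resp.\ $b_j'$'s) to be pairwise distinct, since $\pi_1(x)=a_i' \land \pi_1(x)=a_j'$ is outright inconsistent; and (b) reduces, by Fra\"iss\'e-genericity, to the condition $R(a_i', b_j')$, because an $x\in X$ with $\pi_1(x)=a_i'$ and $\pi_2(x)=b_j'$ exists exactly when the pair $(a_i', b_j')$ lies in $R$. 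Since any finite bipartite $R$-pattern on a chosen pair of sequences in $A$ is realized in the generic model, compactness produces $\langle a_i', b_j' \rangle$ with $R(a_i', b_j') \iff i<j$.

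The main obstacle is the classification-theoretic verification that $T$ is simple with $SU$-rank exactly $3$ (and in particular $NSOP$). Simplicity follows from the standard independence theorem for the generic $R$ on $A$ (rank-$1$ simple unstable, essentially the random graph) combined with a generic fibering $X \to A\times A$; a routine amalgamation argument shows the composite is simple. The rank computation decomposes as: rank $1$ for the fiber of $(\pi_1,\pi_2)$ over a generic $(a,b)\in A^2$, plus rank $2$ for the generic pair $(a,b)$ in the random graph $(A,R)$, giving $SU$-rank $3$ for a generic element of $X$. The delicate point is that the ordered pattern is witnessed \emph{only} along the indiscernible sequence extracted from the random $R$ via compactness and Ramsey, and is never a definable relation on $A$; this is what prevents the cross-row order from promoting itself into $SOP$ and so is the step that most needs care in the proof.
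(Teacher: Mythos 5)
Your proposal takes the same broad strategy as the paper---build an explicit generic rank-$3$ structure in which $\vp(x;a_i)$ and $\vp(x;a_j)$ are mutually inconsistent for $i\neq j$ (giving $2$-emptiness of each column) while the cross-consistency pattern $\exists x\,(\vp(x;a_i)\land\vp(x;b_j))$ is controlled by a random binary relation, from which the order pattern is extracted by compactness/Ramsey---but the specific model differs. The paper uses a one-sorted theory with two crosscutting equivalence relations $E,F$ and a unary predicate $P$ constant on $E\cap F$-boxes; your formula $\vp(x;a_i,0)=E(x,a_i)\land P(x)$ pins $x$ to an $E$-class and an $F$-class, so cross-consistency tracks which boxes satisfy $P$. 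Your version replaces equivalence classes by fibers of projections $\pi_1,\pi_2:X\to A$ and replaces the box-coloring $P$ by nonemptiness of the fiber over $(a,b)\in A^2$, which you tie to a random graph $R$ on $A$. Under the natural translation ($E$-class of $x$ $\leftrightarrow$ $\pi_1(x)$, $F$-class of $x$ $\leftrightarrow$ $\pi_2(x)$, ``$P$ holds on box $(a,b)$'' $\leftrightarrow$ $R(a,b)$), the two examples are close cousins, and the rank computation $1+1+1=3$ goes through in both.

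However, as written, your construction contains an internal contradiction. You stipulate that the $(\pi_1,\pi_2)$-fibers over $A\times A$ are infinite, but two sentences later you rely on ``an $x\in X$ with $\pi_1(x)=a_i'$ and $\pi_2(x)=b_j'$ exists exactly when $(a_i',b_j')$ lies in $R$.'' Both cannot hold: if every fiber is infinite then the existential is always true, the cross-pattern carries no information, and there is no order property. You need to choose one of two fixes. \emph{Either} require that the fiber over $(a,b)$ be nonempty iff $R(a,b)$ (and infinite when nonempty), in which case the argument as you outline it goes through; \emph{or}, closer to the paper, keep all fibers infinite but add a unary predicate $P$ on $X$ constant on fibers, with $P$-value of the fiber over $(a,b)$ tracking $R(a,b)$, and put $P(x)$ into the formula $\vp$. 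Either way, the final step---that the generic $T$ is simple of $SU$-rank exactly $3$---is asserted rather than proved; the paper also leaves this at the level of an observation, so this is not a gap relative to the paper, but it should at least be noted that simplicity hinges on free amalgamation for the class generated by the fiber/edge constraints, and that the ``random'' $R$-pattern along the extracted sequences is what keeps the order undefinable and prevents $SOP$, exactly as you remark at the end.
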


\begin{proof}
Let $T$ be the theory of two crosscutting equivalence relations, $E$ and $F$, each with infinitely many infinite classes
and such that each intersection $\{ x : E(a,x) \land F(x,b) \}$ is infinite. Let $P$ be a unary predicate such that
\begin{itemize}
\item $(\forall x,y) ( E(x,y) \land F(x,y) \implies P(x) \iff P(y))$
\item For all $n<\omega$ and $y_1,\dots y_k, y_{k+1},\dots y_n$ elements of distinct $E$-equivalence classes, 
there exists $z$ such that $i\leq k \implies (\forall x)(E(x,y) \land F(x,z) \implies P(x))$ and 
$k<i\leq n \implies (\forall x)(E(x,y) \land F(x,z) \implies \neg P(x)))$ 
\end{itemize}

Let $\psi(x;y,z)$ be $E(x,y) \land P(y)$ if $z=0$, and $F(x,y) \land P(y)$ otherwise. Let $\langle a_i, b_i : i<\omega \rangle$ 
be a sequence of elements chosen so that $(\forall x) (E(x,a_i) \land F(x,b_j) \implies P(x))$ iff $i<j$. Then it is easy to see 
$\psi$ has the $2$-empty order property on the sequence $\langle (a_i,0), (b_i,1) : i<\omega \rangle$.
\end{proof}

\begin{rmk} \label{half-comp}
Assuming $MA + 2^{\aleph_0} > \aleph_1$, Shelah has constructed an ultrafilter on $\omega$ which saturates (small) models of the
random graph, but not of theories with the tree property \emph{(\cite{Sh:c} Theorem VI.3.9)}. 
This is a strong argument for the ``semi-compatible order property'' being less complex: it cannot, by itself,
imply maximality in the Keisler order, whereas we will see that the $\infty$-compatible order property does. 
It may still be that persistence, in the sense of \cite{mm-article-2}, of any order property in $P_2$ creates complexity.
\end{rmk}

We return to the study of the compatible order property.

\begin{conv}
When more than one characteristic sequence is being discussed, write $P_n(\vp)$ to indicate the $n$th hypergraph associated 
to the formula $\vp$. Recall that $\vp_\ell$ is shorthand for $\bigwedge_{1\leq i\leq \ell} \vp(x;y_i)$. 
\end{conv}

The following general principle will be useful.

\begin{claim} \label{comp-claim}
Suppose that we have a sequence $C :=\langle c_i : i \in \mathbb{Z} \rangle$ and a formula $\rho(x;y,z)$ such that:
\begin{enumerate}
\item $\exists x \rho(x;c_i,c_j) \iff i<j$
\item $\exists x \left(\bigwedge_{\ell \leq n} \rho(x;c_{i_\ell},c_{j_\ell})\right)$ just in case 
$\operatorname{max} \{ i_1,\dots i_n \} < \operatorname{min} \{ j_1, \dots j_n \}$
\end{enumerate}

Then $\rho$ has the $\infty$-compatible order property.
\end{claim}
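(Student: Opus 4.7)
The plan is to exhibit an explicit witnessing sequence for the $\infty$-compatible order property of $\rho$, using the given sequence $C = \langle c_i : i \in \mathbb{Z}\rangle$ augmented by two ``endpoint'' elements obtained via compactness. Recall that each $a_i, b_j$ in the compatible order property is a parameter for $\rho(x;y,z)$, hence a pair of elements of the monster model.

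First, I would apply compactness in a sufficiently saturated elementary extension to produce elements $c^+$ and $c^-$ playing the roles of $+\infty$ and $-\infty$ relative to the $c_i$'s under $\rho$. Explicitly, consider the type $p(u,v)$ over $C$ asserting, for every finite $I,J \subset \omega$, that
\[ \exists x \Bigl( \bigwedge_{i\in I} \rho(x; c_{2i+1}, u) \land \bigwedge_{j\in J} \rho(x; v, c_{2j}) \Bigr) \]
holds if $\max I < \min J$ and fails otherwise. This type is finitely satisfiable: for any finite subtype, pick an integer $N$ larger (in absolute value) than all indices appearing, and realize $u := c_N$, $v := c_{-N}$. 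The required consistency pattern then follows directly from hypothesis (2), since for these realizations $\max\{2i_k+1, -N\} = \max_k (2i_k+1)$ and $\min\{N, 2j_k\} = \min_k 2j_k$, while the negation side follows from the ``only if'' direction of (2).

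Second, define the witnessing pairs by setting $a_i := (c_{2i+1}, c^+)$ and $b_j := (c^-, c_{2j})$ for $i,j < \omega$. To verify the $\infty$-compatible order property, fix any $m<\omega$ and indices $i_1,\dots,i_m,j_1,\dots,j_m$. Then
\[ \exists x \bigwedge_{k=1}^m \rho(x; a_{i_k}) \land \rho(x; b_{j_k}) \;=\; \exists x \bigwedge_{k=1}^m \rho(x; c_{2i_k+1}, c^+) \land \rho(x; c^-, c_{2j_k}), \]
which by the choice of $(c^+, c^-)$ holds iff $\max_k (2i_k+1) < \min_k 2j_k$, iff $\max_k i_k < \min_k j_k$ (since the indices are integers). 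This is exactly what the $\infty$-compatible order property demands for $\rho$, so we are done.

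The main subtlety is the compactness step: one must check that the finite satisfiability of $p(u,v)$ covers both the positive consistency assertions and the negative ones uniformly. Attempting to avoid $c^\pm$ by substituting a growing integer sequence $c_{\pm N(i)}$ into the definition of $a_i, b_j$ runs into the problem that no single unbounded function of $i$ can simultaneously dominate all the relevant indices in every finite configuration, since the roles of ``upper bound'' and ``indexing'' become entangled. Passing to $\pm\infty$ cleanly decouples these roles, and is essentially forced by the shape of hypothesis (2).
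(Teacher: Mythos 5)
Your proof is correct, but it takes a genuinely different route from the paper's. You apply compactness \emph{first}, realizing a type $p(u,v)$ to produce endpoint elements $c^+, c^-$ in a saturated extension that sit above, resp.\ below, all of $C$; the witnessing pairs $a_i = (c_{2i+1}, c^+)$ and $b_j = (c^-, c_{2j})$ then each have one coordinate frozen at $\pm\infty$ and the other walking along the interleaved odd/even positions of $C$, so hypothesis (2) reduces at once to the comparison $\max\{2i_k+1\} < \min\{2j_k\}$, i.e.\ $\max i_k < \min j_k$. The paper instead builds, for each $n<\omega$, a finite fragment of the compatible order property living entirely inside $C$: it takes $\alpha_i := (c_{2i-1}, c_{4n-2i+1})$ and $\beta_j := (c_{-2j}, c_{2j})$, so the ``far'' coordinates recede concentrically as the fragment size $n$ grows, and compactness is applied only at the end to assemble the infinite witness. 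Both arguments hinge on the same observation---that only the interleaved odd/even comparison is binding, the outer coordinates being automatically dominated---but your version decouples the two roles more cleanly and avoids the nesting bookkeeping. One small correction to your closing remark: passing to $\pm\infty$ is not ``essentially forced.'' There is indeed no single unbounded function $N(i)$ that works for a direct infinite construction, but the paper's approach shows you need not want one: it suffices to let the outer coordinate depend on the finite stage $n$ (as in $c_{4n-2i+1}$), exhibit a length-$n$ fragment for every $n$, and invoke compactness afterwards. The two uses of compactness are interchangeable here, and yours is the slicker of the two.
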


\begin{proof}
By compactness, it is enough to show that there are elements $\langle \alpha_i, \beta_i : i<n \rangle$ witnessing
a fragment of the $\infty$-compatible order property of size $n$. 

Define $\alpha_1\dots \alpha_n, \beta_1,\dots \beta_n$ as follows. Remark \ref{cop-picture} provides a picture. 

\begin{itemize}
\item $\alpha_i := c_{2i-1} c_{4n-2i+1}$, $1 \leq i \leq n$
\item $\beta_i := c_{-2i} c_{2_i}$, $1 \leq i \leq n$
\end{itemize}

Then $P_1(\alpha_i)$, $P_1(\beta_i)$ for $1\leq i\leq n$ by (1). For all $1 \leq k, r \leq n$ with $r+k=m$,
condition (2) says that $P_m(\alpha_{i_1},\dots \alpha_{i_k}, \beta_{j_1},\dots \beta_{j_r})$
iff
\[ \operatorname{max} \{ 2\ell : \ell \in \{j_1, \dots j_r\} \} < \operatorname{min} \{ 2s-1 : s \in \{i_1, \dots i_k \} \} \]

\noindent that is, iff $\operatorname{max} \{ j_1, \dots j_r \} < \operatorname{min} \{ i_1, \dots i_k \}$, so we are done.
\end{proof}

\begin{rmk} \label{cop-picture}
The $\infty$-compatible order property decribes an interaction between two $P_\infty$-complete graphs,
i.e. consistent types. 
The hypotheses (1)-(2) of Claim \ref{comp-claim} are enough to allow a weak description of intervals. 
That is, we choose the sequences $\alpha_i$, $\beta_i$ to each describe a concentric sequence of intervals 
(each $\alpha_i$, $\beta_i$ corresponds to a set of matching parentheses)
along the sequence $\langle c_i \rangle$:
\begin{align*} \leftarrow[-[-[-[-&]-]-]-]-- \dots --(-(-(-(-)-)-)-)\rightarrow \\ 
\end{align*}
\noindent which we can interlace to obtain $\infty$-c.o.p. by judicious choice of indexing:
\[  \leftarrow 
 		\left[ 			  \ds 
 		\left[ \ds 	 
 		\left[ \ds 	 
 		\left[ \ds 	 
 		\left(  \ds 	 
									    	\right] \ds \left( \ds 
								 		 		\right] \ds \left( \ds 
										 	 	\right] \ds  \left(   \ds  
											 	\right]  \ds \right)  
 										 		\ds \right) \ds  
 										 		\right) \ds  
 										 		\right)  \rightarrow                \]
\noindent In this picture, the enumeration of the $\alpha$s $(~)$, would proceed from the outmost pair to the inmost
and the enumeration of the $\beta$s $[~]$ from inmost to outmost.
\end{rmk}

\begin{obs} \label{cop-p2}
Suppose that $\vp$ has the strict order property, i.e. there is an infinite sequence $\langle c_i : i<\omega \rangle$ on which 
$\exists x (\neg \vp(x;c_i) \land \vp(x;c_j)) \iff i<j$. Then $\neg \vp(x;y) \land \vp(x;z)$ has the $\infty$-compatible order property. 
\end{obs}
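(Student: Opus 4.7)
The plan is to apply Claim \ref{comp-claim} to the formula $\rho(x;y,z) := \neg\vp(x;y) \land \vp(x;z)$ and the SOP-witnessing sequence $\langle c_i : i<\omega \rangle$, which I may extend by compactness to a $\mathbb{Z}$-indexed indiscernible sequence (invoking the equivalent form of SOP recalled just before Theorem \ref{threaten-order}, which provides an indiscernible witness). The conclusion of Claim \ref{comp-claim} will give $\infty$-compatible order property for $\rho = \neg\vp(x;y) \land \vp(x;z)$, which is exactly what is claimed.

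Hypothesis (1) of Claim \ref{comp-claim}, demanding that $\exists x\, \rho(x;c_i,c_j) \iff i<j$, is a literal restatement of the SOP hypothesis on $\vp$. The substance of the argument lies in verifying hypothesis (2) of the claim: that for every $n$ and finite tuples of indices,
\[ \exists x \bigwedge_{\ell \leq n} \rho(x;c_{i_\ell}, c_{j_\ell}) \iff \max\{i_1,\dots,i_n\} < \min\{j_1,\dots,j_n\}. \]
The forward implication is immediate, for any common witness $x$ of the left-hand side simultaneously satisfies $\neg\vp(x;c_{i_\ell}) \land \vp(x;c_{j_m})$ for every pair $(\ell,m)$, and (1) then forces $i_\ell < j_m$ throughout.

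For the reverse implication, I would invoke the cut-consistency property standardly derivable from SOP: on an SOP-witnessing indiscernible sequence, the partial type $\{\neg\vp(x;c_i) : i \in I\} \cup \{\vp(x;c_j) : j \in J\}$ is consistent whenever $I, J$ are finite with $\max I < \min J$. This is a classical consequence of Shelah's equivalent formulation of SOP in terms of a definable preorder carrying an infinite ascending chain (see \cite{Sh:c}, Chapter II, \S 4): if one realizes $\vp(x;y)$ as ``$x$ dominates $y$'' in such a preorder and takes the $c_i$'s as a strictly ascending chain, then by transitivity any element lying in the gap between $c_{\max I}$ and $c_{\min J}$ realizes the whole partial type at once.

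Once cut-consistency is in hand, both hypotheses of Claim \ref{comp-claim} are verified and the claim delivers $\infty$-compatible order property for $\rho$. The one subtle point, and hence the main potential obstacle, is the passage from the single-pair consistency statement (by which SOP was introduced in the hypothesis of the Observation) to the full cut-consistency used for hypothesis (2); but this passage is standard, and with the equivalent preorder formulation cited above the proof reduces to a direct verification.
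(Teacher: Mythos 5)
Your proposal is correct and takes essentially the same route as the paper: both reduce the Observation to an application of Claim \ref{comp-claim} with $\rho(x;y,z) = \neg\vp(x;y) \land \vp(x;z)$, and the real content in each case is that the hypothesis on $\langle c_i \rangle$ forces the sets $\vp(M;c_i)$ to form an increasing chain, from which the required consistency of $\{\neg\vp(x;c_i) : i \in I\} \cup \{\vp(x;c_j) : j \in J\}$ for $\max I < \min J$ follows by monotonicity. The paper phrases this compactly by checking hypothesis (2) for $n \leq 2$ and then saying $P_\infty(\rho)$ has support $2$; you instead derive the full cut-consistency directly, which is the same reasoning unwound. One small remark: the chain inclusion $\vp(M;c_j) \subseteq \vp(M;c_i)$ for $j < i$ is itself extracted from the stated hypothesis (the ``only if'' direction of $\exists x(\neg\vp(x;c_i)\land\vp(x;c_j)) \iff i < j$), so you do not actually need the detour through an indiscernible SOP witness --- the given sequence $\langle c_i \rangle$ already has the monotonicity you need, and a $\mathbb{Z}$-indexed sequence for Claim \ref{comp-claim} can be obtained by simple compactness without passing to an indiscernible.
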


\begin{proof}
Writing $\rho(x;y,z) = \neg\vp(x;y) \land \vp(x;z)$,
\begin{itemize}
\item $\exists x \rho(x;c_i,c_j) \iff i<j$, by definition of strict order;
\item $\exists x (\rho(x;c_i,c_j) \land \rho(x;c_k,c_\ell)) \iff i,k < j,\ell$
\end{itemize}
and the characteristic sequence $P_\infty(\rho)$ has support 2. 
Apply Claim \ref{comp-claim}.
\end{proof}

\begin{expl}
The theory $T$ of the triangle-free random graph with edge relation $R$ has the $\infty$-c.o.p. 
Consider $\vp(x;y,z) = xRy \land xRz$. (The negative instances could be added but are not necessary.) Then: 

\begin{itemize}
\item $P_1((y,z)) \iff \neg yRz$.
\item $P_2((y,z), (y^\prime, z^\prime))$ iff $\{y, y^\prime, z, z^\prime\}$ is an empty graph. 
\item The sequence has support $2$, as the only problems come from a single new edge: 
$P_n ((y_1,z_1),\dots (y_n, z_n))$ iff  
\[ \exists x \left( \bigwedge_{i\leq n} xRy_i \land \bigwedge_{j \leq n} xRz_j  \right) 
~~\mbox{that is, if}~~ \bigcup_i y_i \cup \bigcup_j z_j  ~~\mbox{is a $P_2$-empty graph.}      \]
\end{itemize}

Let $\langle a_i, b_i : i<\omega \rangle$ be a sequence witnessing the \emph{$2$-empty} order property with respect to the 
edge relation $R$, say $a_i R b_j$ iff $j \leq i$. Then $\exists x (xRa_i \land xRb_j)$ iff $i<j$, i.e. $(a_i,b_j) \in P_1$ iff $i<j$.
Also, $\exists x (xRa_i \land xRb_j \land xRa_k \land xRb_\ell)$ if, in addition, $i, k < j,\ell$. Apply Claim \ref{comp-claim}. 
\end{expl}

Finally, we tie the compatible order property to $SOP_3$, a model-theoretic rigidity property.
$SOP_3$ will be discussed extensively in the next section, Definition \ref{sopn-defn}, Definition \ref{sop3-alt}.

\begin{lemma} \label{sop3-cop}
Suppose that $\theta(x;y)$ has $SOP_3$ in the sense of Definition \ref{sopn-defn}, so $\ell(x)=\ell(y)$.
Let $\vp_r = \vp$, $\psi_\ell = \psi$ be the formulas from Definition \ref{sop3-alt}. 
Then $\rho(x;y,z) := \vp_r(y,x) \land \psi_\ell(x,z)$ has the $\infty$-compatible order property
on some $A^\prime \subset P_1$. Moreover, we can choose $A^\prime$ so that the sequence restricted to $A^\prime$ has support 2.   
\end{lemma}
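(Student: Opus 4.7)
The plan is to apply Claim \ref{comp-claim} to $\rho$ and the indiscernible sequence witnessing $SOP_3$.

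Unpacking Definitions \ref{sopn-defn} and \ref{sop3-alt}, the $SOP_3$ hypothesis on $\theta$ provides formulas $\vp_r(y,x)$, $\psi_\ell(x,z)$ together with an indiscernible sequence $\langle c_i : i<\omega \rangle$, which I extend by compactness and indiscernibility to a densely ordered indexing set, such that $\vp_r(c_i, x) \land \psi_\ell(x, c_j)$ is consistent precisely when $i < j$. The principal step, and the main obstacle, is the interval strengthening: for any finite $I, J$,
\[
\bigwedge_{i \in I} \vp_r(c_i, x) \land \bigwedge_{j \in J} \psi_\ell(x, c_j) \text{ is consistent} \iff \max I < \min J.
\]
The ($\Rightarrow$) direction is immediate since any pair $(i,j) \in I \times J$ with $i \geq j$ already forces inconsistency at the binary level. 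The ($\Leftarrow$) direction is a routine indiscernibility-plus-compactness argument: pick $k$ with $\max I < k < \min J$ in the extended index, and use indiscernibility of $\langle c_i \rangle$ to propagate a witness for the binary $SOP_3$ consistency (in particular at $(c_i, c_k)$ and $(c_k, c_j)$) into a single realizer of the full conjunction.

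Once the interval strengthening is in hand, hypotheses (1)--(2) of Claim \ref{comp-claim} for $\rho$ and $\langle c_i \rangle$ follow at once: (1) is the binary case verbatim, and (2) is exactly the interval equivalence above with $I = \{i_\ell\}$ the collection of $y$-indices and $J = \{j_\ell\}$ the collection of $z$-indices. Claim \ref{comp-claim} then produces the sequence $A' := \langle \alpha_i, \beta_i : i<\omega \rangle$ (each $\alpha_i$, $\beta_i$ an appropriate pair of $c$'s, as in the claim's proof) witnessing the $\infty$-compatible order property for $\rho$.

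For the moreover clause: every element of $A'$ is labelled by a pair $(y, z)$ of indices from the $c$-sequence, and by the interval equivalence the predicate $P_n$ on any subfamily $\{(y_s, z_s) : s \leq n\} \subset A'$ holds iff $\max_s y_s < \min_s z_s$. But this is visibly equivalent to the conjunction of the pairwise conditions $\max\{y_s, y_t\} < \min\{z_s, z_t\}$ over all $s, t$, i.e., to $P_2$ holding on every pair (together with $P_1$ on every singleton) drawn from the subfamily. Hence $P_n$ on tuples from $A'$ is determined by its restriction to pairs, so the characteristic sequence restricted to $A'$ has support $2$, as required.
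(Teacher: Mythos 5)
Your overall plan is the paper's plan: feed the indiscernible sequence from Definition \ref{sop3-alt} into Claim \ref{comp-claim} after establishing the ``interval'' characterization of $P_n$ on $A^\prime$, namely $P_n((a_{i_1},a_{j_1}),\dots,(a_{i_n},a_{j_n}))$ iff $\max\{i_t\} < \min\{j_t\}$, and then read off support $2$ from the fact that this condition is a conjunction of pairwise conditions. The $(\Rightarrow)$ direction and the moreover clause are handled exactly right.

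The weak point is your $(\Leftarrow)$ direction. You claim that ``indiscernibility of $\langle c_i\rangle$'' lets you propagate binary witnesses ``at $(c_i,c_k)$ and $(c_k,c_j)$'' to a single realizer of the full conjunction $\bigwedge_{i\in I}\vp(x;a_i)\wedge\bigwedge_{j\in J}\psi(x;a_j)$. As written this does not go through: knowing a separate witness exists for each pair $(i,j)$ with $i<j$ gives no mechanism, via indiscernibility of the parameter sequence alone, for amalgamating them into one element realizing the whole type. (You have also quietly dropped the paper's second auxiliary sequence: Definition \ref{sop3-alt} names the indiscernible sequence $\langle a_i\rangle$ and reserves $\langle c_j\rangle$ for a second sequence of \emph{witnesses}, which you have suppressed by calling the indiscernible sequence $\langle c_i\rangle$.) The correct and much shorter argument, which is what the paper invokes implicitly, uses clause (2) of Definition \ref{sop3-alt} directly: that clause asserts the existence of elements $c_k$ with $\vp(c_k;a_i)$ for all $i\le k$ and $\psi(c_k;a_j)$ for all $j>k$; hence taking $k=\max I$ when $\max I<\min J$, the element $c_k$ already realizes the full conjunction, with no indiscernibility argument and no densification of the index set needed. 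Replace the ``propagation'' sentence with this observation and the proof is complete and coincides with the paper's.
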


\begin{rmk}
This is an existential assertion, and it is straightforward to check that it remains true 
if we modify $\rho$ to include the corresponding negative instances. 
\end{rmk}

\begin{proof} (of Lemma)
Let $A := \langle a_i : i<\mathbb{Q} \rangle$ be an infinite indiscernible sequence from Definition \ref{sop3-alt}. Then 
\[ P_1((a_i, a_j)) \iff \exists x \left(\vp_r(a_i,x) \land \psi_\ell(x,a_j)\right) \iff i < j \] 
by the choice of $\vp, \psi$. More generally,
\[ P_n((a_{i_1}, a_{j_1}), \dots (a_{i_n}, a_{j_n})) \iff 
\exists x \left( \bigwedge_{t \leq n} \vp_r(x;a_{i_t}) \land \bigwedge_{t \leq n} \psi_\ell(x;a_{j_t})  \right)  \]

\noindent which, again applying Definition \ref{sop3-alt}, happens iff
$\operatorname{max} \{ i_1,\dots i_n \} < \operatorname{min} \{ j_1, \dots j_n \}$, a condition which has
support 2. We now apply Claim \ref{comp-claim} to obtain $A^\prime \subset A\times A$ witnessing the compatible order property. 
Note that while $\langle P_n \rangle$ need not depend on 2 elsewhere in $P_1$ (we know very little about $\rho$ off $A$), 
it does depend on 2 on elements from the sequence $A^\prime$. 
\end{proof}

\begin{obs}
Suppose $\theta(x;y)$ has the $\infty$-compatible order property. Then the formula $\vp(x;y,z) := \theta(x;y) \land \neg \theta(x;z)$ has $SOP_3$.
\end{obs}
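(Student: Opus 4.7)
The plan is to read off an $SOP_3$ configuration directly from the $\infty$-c.o.p.~witnesses for $\theta$, exploiting the very strong ``Dedekind-cut'' consistency behavior that the $\infty$-c.o.p.~imposes.

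\textbf{Step 1 (Indiscernibility and the key consistency dichotomy).} Let $\langle (a_i, b_i) : i < \omega \rangle$ witness the $\infty$-compatible order property of $\theta$. By Erd\H{o}s--Rado and compactness, extract an indiscernible subsequence still witnessing the $\infty$-c.o.p.\ (with an additional index added at $-\infty$ or $+\infty$ as needed; without loss of generality treat the index set as $\omega$). The defining clause of Definition \ref{two-op-defn}(1) then gives, for any finite $I, J \subset \omega$,
\[
\exists x \left( \bigwedge_{i \in I} \theta(x; a_i) \wedge \bigwedge_{j \in J} \theta(x; b_j) \right) \iff \max I < \min J,
\]
where empty $\max$ (resp.~$\min$) is interpreted as $-\infty$ (resp.~$+\infty$). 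In particular, $\{\theta(x;a_i):i<\omega\}$ and $\{\theta(x;b_j):j<\omega\}$ are each consistent, while $\theta(x;a_j)\wedge\theta(x;b_i)$ is inconsistent whenever $j\geq i$.

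\textbf{Step 2 (The $SOP_3$ configuration).} Set $c_i := (a_i, b_i)$; the sequence $\langle c_i \rangle$ is indiscernible. Decompose $\vp$ as $\vp(x; c) = \vp_r(x;\pi_0(c)) \wedge \psi_\ell(x;\pi_1(c))$ where $\vp_r(x;y) := \theta(x;y)$ and $\psi_\ell(x;z) := \neg \theta(x;z)$, matching the form featured in Lemma \ref{sop3-cop}. The combinatorial pattern required by Definition \ref{sopn-defn} falls out immediately from Step 1:
\begin{itemize}
\item $\vp_r(x;a_i)\wedge\psi_\ell(x;b_i)^{\mathrm{opp}}$: the formula $\theta(x;a_i)\wedge\theta(x;b_i)$ is inconsistent (take $I = J = \{i\}$, so $i<i$ fails).
\item For $i < j$: $\theta(x;a_j)\wedge\theta(x;b_i)$ is inconsistent ($I=\{j\},J=\{i\}$).
\item $\{\theta(x;a_i):i<\omega\}$ is consistent (take $J=\emptyset$).
\end{itemize}
These are precisely the three clauses of the two-formula $SOP_3$ schema of Definition \ref{sopn-defn}, witnessed on the indiscernible $\langle c_i \rangle$ by the pair $(\vp_r, \psi_\ell)$.

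\textbf{Step 3 (Transfer to $\vp$).} Since $\vp(x;c_i) = \vp_r(x;\pi_0(c_i))\wedge\psi_\ell(x;\pi_1(c_i))$ is exactly $\theta(x;a_i)\wedge\neg\theta(x;b_i)$, we get $SOP_3$ for $\vp$ along the same sequence. For consistency of $\{\vp(x;c_i):i<\omega\}$: any $x^*$ realizing $\bigwedge_i \theta(x;a_i)$ automatically satisfies $\neg\theta(x^*;b_j)$ for every $j$, by Step 1 applied with $I=\omega$ and $J=\{j\}$ (if $\theta(x^*;b_j)$ held then $\max I < j$, impossible). For the rigidity clause of $SOP_3$: the inconsistency of $\vp_r(x;a_j)\wedge\psi_\ell(x;b_i)^{\mathrm{opp}}$, i.e.\ of $\theta(x;a_j)\wedge\theta(x;b_i)$ for $i<j$, translates into the inconsistency of the relevant ``swapped'' conjunctions of instances of $\vp$ along $\langle c_i\rangle$, as required.

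\textbf{Main obstacle.} The genuine work is all in Step 1 through Step 2: the observation that \emph{the consistency of conjunctions of $\theta$-instances behaves exactly like a Dedekind cut} is what makes the $SOP_3$ pattern appear on the nose. The transfer in Step 3 is essentially formal, but depends critically on the ``extreme'' consistency coming from the $\infty$-c.o.p.~(not just 2-point c.o.p.), because we need a single $x^*$ to simultaneously realize $\bigwedge_i \theta(x;a_i)$ and force all the negated $\theta(x;b_j)$'s. Without the infinitary clause, one would only recover the pairwise inconsistency and not the realized positive base set required for the $SOP_3$ consistency type.
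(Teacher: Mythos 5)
There is a genuine gap in the verification of condition~(2) of Definition~\ref{sop3-alt}, and it stems from the wrong choice of $\psi$. You set $\psi_\ell(x;z):=\neg\theta(x;z)$ and then, in Step~3, produce a single $x^*$ realizing $\{\theta(x;a_i):i<\omega\}$, noting correctly that such an $x^*$ automatically satisfies $\neg\theta(x^*;b_j)$ for all $j$. But this $x^*$ realizes $\vp(x;c_i)$ for \emph{every} $i$ and hence realizes the paper's $\psi(x;c_i)=\theta(x;b_i)$ for \emph{no} $i$ --- so it cannot serve as any finite $c_j$ in Definition~\ref{sop3-alt}(2), which requires, for each $j<\omega$, a witness realizing $\vp(x;a_ib_i)$ for $i\leq j$ \emph{and} $\psi(x;a_ib_i)$ for $i>j$. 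What the paper actually does here is take $\psi(x;y,z):=\theta(x;z)$ (the positive side) and realize, for each $j$, the cut type $p_j=\{\theta(x;a_i):i\leq j\}\cup\{\theta(x;b_\ell):\ell>j\}$; the $\infty$-c.o.p.\ makes each $p_j$ consistent (max of the $a$-indices is $j<j+1=$ min of the $b$-indices), and the negative instances $\neg\theta(c_j;b_i)$ for $i\leq j$ then come for free from $\neg P_2(a_j,b_i)$. Your $x^*$ is, in effect, the $j=\infty$ limit and does not supply the required sequence.

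There is also a confusion of references that seems to cause the gap: you write that your three bullets are ``precisely the three clauses of the two-formula $SOP_3$ schema of Definition~\ref{sopn-defn},'' but Definition~\ref{sopn-defn} is the one-formula cyclic version (which moreover requires $\ell(x)=\ell(y)$, generally not satisfied by $\vp(x;y,z)$); the two-formula version is Definition~\ref{sop3-alt}. Your first bullet (inconsistency of $\theta(x;a_i)\wedge\theta(x;b_i)$) is not what condition~(1) of Definition~\ref{sop3-alt} asks for --- condition~(1) is the trivial $\neg\theta\wedge\theta$ inconsistency once $\psi$ is chosen correctly --- and your third bullet, ``$\{\theta(x;a_i):i<\omega\}$ is consistent,'' is strictly weaker than condition~(2). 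To repair the proof, change $\psi$ to $\theta(x;z)$, replace the single $x^*$ by the family of cut realizations $c_j\models p_j$, and keep your bullet~2 for condition~(3).
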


\begin{proof}
Let $\langle a_i b_i : i<\omega \rangle$ be a sequence witnessing the $\infty$-compatible order property;
this will play the role of the sequence $\langle \overline{a}_i : i<\omega \rangle$ from Definition \ref{sop3-alt}. 
In the notation of that Definition, let $\vp(x;y,z) := \theta(x;y) \land \neg \theta(x;z)$ and
$\psi(x;y,z) := \theta(x;z)$. We check the conditions. 

(1) Clearly $\{ \vp(x;y,z), \psi(x;y,z) \}$ is inconsistent. 

(3) When $i > j$, $\{ \vp(x;a_i b_i), \psi(x;a_j b_j) \} = \{ \theta(x;a_i) \land \neg \theta(x; b_i), \theta(x;b_j) \}$
is inconsistent because $\neg P_2 (a_i, b_j)$. 

Finally, for $1 \leq j < \omega$ let 
$p_{j}(x) = \{ \theta(x;a_i) : 1\leq i\leq j \} \cup \{ \theta(x;b_\ell) : j < \ell <\omega \}$. 
The $\infty$-c.o.p. implies
$P_n(a_1,\dots a_j, b_{j+1}, \dots b_n)$ for all $n<\omega$, so $p_j$ is consistent. 
However, $i<j \implies \neg P_2(b_i, a_j)$ 
so $p_{j,n}(x) \vdash \neg \theta(x;b_i)$ for each $1\leq i \leq j$. Choosing $c_j \models p_j$ for each $j <\omega$ gives (2).
\end{proof}

\begin{rmk}
Applying Shelah's theorem that any theory with $SOP_3$ is maximal in the Keisler order \cite{Sh500}, \cite{ShUs},
we conclude that if $T$ contains a formula $\vp$ with the $\infty$-compatible order property, then $T$ is maximal in the Keisler order. 
For more on Keisler's order, see \cite{mm-article-1}.
\end{rmk}

\section{Calibrating randomness} \label{section:randomness}

In this final section,
we observe and explain a discrepancy between the model-theoretic notion of an infinite random $k$-partite graph 
and the finitary version given by Szemer\'edi regularity, showing essentially 
that a class of infinitary $k$-partite random graphs which do not admit reasonable finite approximations must have the 
strong order property $SOP_3$ (a model-theoretic notion of rigidity, Definition \ref{sopn-defn} below).

\subsection{A seeming paradox} 

\begin{obs} \label{tfrg}
Let $T$ be the theory of the triangle-free random graph, with edge relation $R$. 
Then it is consistent with $T$ that there exist disjoint infinite sets $X,Y,Z$ such that
each pair $(X,Y), (Y,Z), (X,Z)$ is a bipartite random graph. 
\end{obs}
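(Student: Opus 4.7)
The plan is to construct $X, Y, Z$ directly inside a sufficiently saturated $\mathfrak{C} \models T$, by a stage-by-stage extension argument that uses only the ``extension property'' of the triangle-free random graph: any 1-type over a finite parameter set whose ``positive part'' (the set of vertices demanded as $R$-neighbors) is $R$-independent is realized in $\mathfrak{C}$. Equivalently, by compactness, I would show the finite consistency with $T$ of axioms asserting three disjoint infinite $R$-independent sets $X, Y, Z$ such that each of $(X, Y), (Y, Z), (X, Z)$ is a bipartite random graph in $R$.

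First I would enumerate the countably many existential ``bipartite random'' requirements: for each ordered pair $(P, Q)$ chosen from $\{X, Y, Z\}$, each finite $Q_0 \subset Q$ already chosen, and each $f : Q_0 \to \{0, 1\}$, some element of $P$ must realize $f$ over $Q_0$. At each stage I have finite, pairwise disjoint, internally $R$-free sets $X_n, Y_n, Z_n$. To satisfy the next requirement, say ``add $y \in Y$ with $R(y, x) \leftrightarrow f(x) = 1$ on some $X_0 \subset X_n$,'' I would specify the full 1-type of the new element $y$ over $X_n \cup Y_n \cup Z_n$ by declaring $y$ adjacent precisely to $\{x \in X_0 : f(x) = 1\}$ and non-adjacent to every other element of $X_n \cup Y_n \cup Z_n$. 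Since the positive part of this type lies entirely in $X_n$, which is $R$-independent, the extension property yields such a $y \in \mathfrak{C}$; the five remaining kinds of requirement are handled symmetrically by the same device.

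The main step, and the reason the observation is not immediate, is seeing why the apparent triangle obstruction does not bite. One might worry that once $R(x, y)$ holds with $x \in X$, $y \in Y$, no element of $Z$ can be adjacent to both, which would threaten both the $(X, Z)$ axiom demanding a $z$ adjacent to $x$ and the $(Y, Z)$ axiom demanding one adjacent to $y$. But each such axiom is purely existential, so I am free to spend it on a \emph{fresh} element whose only $R$-neighbors are exactly those demanded by the single requirement at hand; all other prospective edges are set to non-edges, so triangle-freeness is preserved automatically. The essential content of the argument therefore reduces to the structural observation that the positive part of each demanded 1-type is, by design, contained in a single one of $X_n, Y_n, Z_n$, hence is $R$-independent and realizable in $\mathfrak{C}$.
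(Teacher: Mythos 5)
Your proof is correct and takes essentially the same approach as the paper's: both build $X,Y,Z$ as increasing unions of finite internally $R$-free sets, adding fresh elements whose positive $R$-demands lie entirely in a single column (hence form an $R$-independent set), so the triangle-free extension property applies and no triangle is ever requested. The only cosmetic difference is bookkeeping --- you dovetail the countably many requirements one new vertex at a time, whereas the paper adds, at stage $i+1$, one new element of $X$ for each subset of $Y_i$ and each subset of $Z_i$ simultaneously; the key observation ``the positive part of each demanded type sits inside one $R$-free column'' is identical.
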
 

\begin{proof}
The construction has countably many stages. At stage 0, let $X_0 = \{ a \}, Y_0 = \{ b \}, 
Z_0 = \{ c \}$ where $a,b,c$ have no $R$-edges between them. At stage $i+1$, let $X_{i+1}$ be
$X_i$ along with $2^{|Y_i| + |Z_i|}$-many new elements:

\begin{enumerate}
\item for each subset $\tau \subset Y_i$, a new element $x_\tau$ such that for $y \in Y$,
$x_\tau R y$ $\iff y \in \tau$, however $\neg x_\tau R x$ for any $x$ previously added to $X_{i+1}$.  
  
\item for each subset $\nu \subset Z_i$, a new element $x_\nu$ such that for $z \in Z$,
$x_\nu R z$ $\iff z \in \nu$, with $x_\nu$ likewise $R$-free from previous elements of $X_{i+1}$.  
\end{enumerate} 

$Y_{i+1}, Z_{i+1}$ are defined symmetrically.
As we are working in the triangle-free random graph, in order that the the construction be able to continue,
it is enough that the sets $X_i, Y_i, Z_i$ are each empty graphs, i.e., at no point do we ask for a triangle. 

To finish, set $X = \bigcup_i X_i$, $Y= \bigcup_i Y_i$,
$Z = \bigcup_i Z_i$. Each pair is a bipartite random graph, as desired.
\end{proof}

But recall: 

\begin{thm-lit} (weak version of Key Lemma, Theorem \ref{key-lemma}) \label{key-lemma-1}
Fix $1> \delta > 0$ and a binary edge relation $R$.
Then there exist $\epsilon^\prime = \epsilon^\prime(\delta), N^\prime = N^\prime(\epsilon^\prime, \delta)$ such that: 
\emph{if}
$\epsilon < \epsilon^\prime$, $N > N^\prime$, 
$X, Y, Z$ are disjoint finite sets of size at least $N$, and each of the pairs 
$(X,Y), (Y,Z), (X,Z)$ is $\epsilon$-regular with density $\delta$, \emph{then}
there exist $x \in X, y \in Y, z \in Z$ so that $x,y,z$ is an $R$-triangle. 
\end{thm-lit}

Obviously, we cannot have an $R$-triangle in the triangle-free random graph. Nonetheless each of the pairs
$(X,Y)$ in Observation \ref{tfrg} manifestly has finite subgraphs of any attainable density. 

The difficulty comes when we try to choose finite subgraphs $X^\prime \subset X, Y^\prime \subset Y, Z^\prime \subset Z$
so that the densities of all three pairs are \emph{simultaneously} near the same $\delta > 0$. If $(X^\prime, Y^\prime)$
and $(Y^\prime, Z^\prime)$ are reasonably dense, $(X^\prime, Z^\prime)$ will be near $0$. Put otherwise, we may choose
elements of $X$ independently over $Y$, and independently over $Z$, but not both at the same time. 

The constructions below generalize this example, and give a way of measuring the ``depth'' of independence in a constellation
of sets $X_1, \dots X_n$, where any pair $(X_i, X_j)$ is a bipartite random graph. The example of the triangle-free random graph 
is paradigmatic: we shall see that a bound on the depth of independence will produce the $3$-strong order property
$SOP_3$.  

\subsection{Constellations of independence properties.}

\begin{defn}  
\label{constellations} Fix a formula $R(x;y)$.
\begin{enumerate}
\item Let $A, B$ be disjoint sets of $k$- and $n$-tuples respectively, where $k=\ell(x), n=\ell(y)$. 
Then $A$ is \emph{independent over $B$} with respect to $R$ just in case for any two finite disjoint $\eta, \nu \subset B$,
there exists $a \in A$ such that $b \in \eta \rightarrow R(a;b)$ and $b\in \nu \rightarrow \neg R(a;b)$.

\item Let $A_1,\dots A_k$ be disjoint sets (of $m$-tuples, where $m=\ell(x)=\ell(y)$). 
Then $A_1$ is \emph{independent over $A_2,\dots A_k$} with respect to $R$ just in case
$A_1$ is independent over $B := \bigcup_{2\leq i \leq k} A_i$ in the sense of (2).

\item $R(x;y)$ is a \emph{bipartite random graph} if there exist disjoint infinite sets $A,B$ such that 
$A$ and $B$ are each independent over the other wrt $R$.

\item $R(x;y)$ is \emph{$I^m_k$}, for some $2 \leq k \leq m$, if there exist disjoint infinite sets $\langle A_i : i<m \rangle$ such that
for any distinct $i_1,\dots i_k < \omega$, $A_{i_1}$ is independent over $\bigcup_{2 \leq j \leq k} A_{i_j}$ w.r.t. $R$. 
Note that $k$ refers to the \emph{depth} of the independence, and not the size of the finite disjoint $\eta, \nu$. 
\end{enumerate}
\end{defn}

\begin{obs}  
Let $R(x;y)$ be a symmetric formula. The following are equivalent.
\begin{enumerate}
\item $R$ is $I^\omega_\omega$.
\item There is an infinite subset of the monster model on which $R$ is a random graph. (Certainly this
need not be definable or interpretable in any way).
\end{enumerate}
\end{obs}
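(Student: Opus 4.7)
The plan is to prove both implications by bookkeeping constructions, using the symmetry of $R$ throughout to guarantee that the resulting graph is undirected.

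For $(1)\Rightarrow(2)$, fix witnesses $\langle A_i : i<\omega\rangle$ to $I^\omega_\omega$ and build a sequence $\langle x_n : n<\omega\rangle$ with $x_n \in A_n$ by induction, so in particular the $x_n$ are distinct. In advance, enumerate (with each entry appearing cofinally often) all triples $(k,\eta_0,\nu_0)$ where $k<\omega$ and $\eta_0,\nu_0$ are disjoint subsets of $\{0,\dots,k-1\}$. At stage $n$, suppose the bookkeeping entry is $(k,\eta_0,\nu_0)$ with $k\leq n$; since $A_n$ is independent over $A_0 \cup \cdots \cup A_{n-1}$ with respect to $R$, choose $x_n \in A_n$ so that $R(x_n,x_i)$ for $i\in\eta_0$ and $\neg R(x_n,x_i)$ for $i\in\nu_0$ (the pattern on indices outside $\eta_0\cup\nu_0$ is irrelevant). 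Then $X:=\{x_n : n<\omega\}$ satisfies the extension axiom of the countable random graph, and the symmetry of $R$ makes $(X,R)$ undirected.

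For $(2)\Rightarrow(1)$, let $X$ be an infinite subset of the monster on which $R$ is a random graph. Iterating the extension axiom (at each step enlarge $\nu$ by the previously found witness) shows that every disjoint pair of finite subsets $\eta,\nu \subset X$ is realized by infinitely many $x \in X$ with $R(x,y) \iff y \in \eta$ for $y \in \eta\cup\nu$. Enumerate all such pairs $(\eta_m,\nu_m)_{m<\omega}$ and then all their witnesses; distribute these witnesses round-robin to produce a partition of $X$ into disjoint infinite sets $\langle A_i : i<\omega\rangle$ so that each $A_i$ contains infinitely many witnesses to each pair. Then given distinct $i_1,\dots,i_k<\omega$ and disjoint finite $\eta,\nu\subset \bigcup_{2\leq j\leq k} A_{i_j} \subset X$, some witness to $(\eta,\nu)$ lies in $A_{i_1}$, yielding exactly the independence required in the definition of $I^\omega_\omega$.

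The only technical point worth attention is the bookkeeping in the second direction, which must arrange that each piece $A_i$ of the partition inherits the full random-graph richness of $X$; once the witnesses are distributed uniformly this is automatic, and no further model-theoretic input is needed for either direction.
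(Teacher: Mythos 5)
The paper states this as an Observation with no written proof, so there is nothing internal to compare against; the question is whether your argument is sound, and it essentially is. Direction $(1)\Rightarrow(2)$ is clean: choosing $x_n\in A_n$ and using that $A_n$ is independent over $A_0\cup\dots\cup A_{n-1}$ to realize a prescribed $R$-pattern over $\{x_i : i\in\eta_0\cup\nu_0\}\subset\{x_0,\dots,x_{n-1}\}$, with bookkeeping that revisits each pattern at a stage $n\geq k$, produces a countable set on which the graph extension axioms hold, and symmetry of $R$ makes this an honest undirected random graph.

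The one step you should tighten is the round-robin distribution in $(2)\Rightarrow(1)$. As written, ``enumerate all their witnesses; distribute these witnesses round-robin'' is ambiguous: a single $x\in X$ is generally a witness to many pairs $(\eta_m,\nu_m)$ at once, so distributing the witness lists of distinct pairs independently could attempt to place one element into two different $A_i$'s. The standard fix is to enumerate requirements $(i,m)$ with each pair appearing infinitely often, and at the stage devoted to $(i,m)$ choose a witness to $(\eta_m,\nu_m)$ not yet assigned to any $A_j$ (such a witness exists, since at each stage only finitely many elements have been placed and you have already shown each pair has infinitely many witnesses) and put it into $A_i$. This yields disjoint infinite sets $A_i$ each meeting the witness set of every pair; note that $I^\omega_\omega$ only requires disjoint infinite sets, so you need not insist on a partition of all of $X$. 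Your concluding observation, that $\eta\cup\nu\subset\bigcup_{2\le j\le k}A_{i_j}$ is automatically disjoint from $A_{i_1}$ so any witness lying in $A_{i_1}$ serves, is exactly right.
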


\begin{defn} \emph{(Shelah, \cite{Sh500}:Definition 2.5)} \label{sopn-defn}
For $n \geq 3$, the theory \emph{$T$ has $SOP_n$} 
if there is a formula $\vp(x;y)$, $\ell(x)=\ell(y)= k$,
$M \models T$ and a sequence $\langle a_i : i<\omega \rangle$ with
each $a_i \in M^k$ such that:

\begin{enumerate}
\item $M \models \vp(a_i, a_j) $ for $i<j<\omega$
\item $M \models \neg \exists x_1,\dots x_n 
(\bigwedge\{ \vp(x_m, x_k) : m<k<n ~\mbox{and}~ k=m+1~ \mbox{mod}~ n \})$
\end{enumerate}
\end{defn}

\begin{thm-lit} \emph{(Shelah, \cite{Sh500}: (1) is Claim 2.6, (2) is Theorem 2.9)} \label{sopn-theorems}
\begin{enumerate}
\item For a theory $T$, $SOP \implies SOP_{n+1} \implies SOP_n$, for $n \geq 3$ (not necessarily for the same formula).
\item If $T$ is a complete theory with $SOP_3$, then $T$ is maximal in the Keisler order. 
\end{enumerate}
\end{thm-lit}

We will derive $SOP_3$ from failures of randomness, using the following equivalent definition. Remember that, by convention, $a_i, x, \dots$ need not be singletons. 

\begin{defn} \label{sop3-alt} \emph{(\cite{ShUs}:Fact 1.3)} 
$T$ has $SOP_3$ iff there is an indiscernible sequence $\langle a_i : i<\omega \rangle$ and $\mathcal{L}$-formulas $\vp(x;y), \psi(x;y)$ such that:

\begin{enumerate}
\item $\{ \vp(x;y), \psi(x;y) \}$ is contradictory.
\item there exists a sequence of elements $\langle c_j : j<\omega \rangle$ such that
\begin{itemize}
\item $i\leq j$ $\implies$ $\vp(c_j; a_i)$
\item $i > j$ $\implies$ $\psi(c_j; a_i)$
\end{itemize}
\item if $i<j$, then $\{ \vp(x;a_j), \psi(x;a_i) \}$ is contradictory.
\end{enumerate}
\end{defn}

The idea of the construction (Theorem \ref{independence-at-n}) is contained in the following straightforward example. 

\begin{expl} \label{expl-rg}
Let $T$ be the triangle free random graph, with edge relation $R$. 
Then $R$ is $I^3_2$ but not $I^3_3$, and $T$ is $SOP_3$. 
\end{expl}

\begin{proof}
Let us prove the final clause (for the rest see Observation \ref{tfrg} and the discussion following). 

The theory by definition contains a forbidden configuration, a triangle. Suppose $A,B,C$ are disjoint infinite sets
witnessing $I^3_2$. Let us construct a sequence of triples $ S = \langle a_i, b_i, c_i : i<\omega \rangle$ such that,
for $i < \omega$,

\begin{itemize}
\item For all $j \leq i$, $b_i R a_j$. 
\item For all $j \leq i$, $c_i R b_j$.
\item For all $j \leq i$, $a_{i+1} R c_j$. 
\end{itemize}

\noindent Define a binary relation $<_\ell$ on triples by:
\[  (x,y,z) \leq_\ell (x^\prime, y^\prime, z^\prime) \iff \left( (xRy^\prime \land yRz^\prime \land zRx^\prime) \right) \]

\vspace{1mm}
\noindent While $<_\ell$ need not be a partial order on the model, it does linearly order the sequence $S$ by construction.
Looking towards Definition \ref{sop3-alt}, let us define two new formulas (the variables $t$ stand for triples):
\begin{itemize}
\item $\vp(t_0; t_1, t_2) = t_1 <_\ell t_2 <_\ell t_0$
\item $\psi(t_0; t_1, t_2) = t_0 <_\ell t_1 <_\ell t_2$ 
\end{itemize}

Let us check that these formulas give $SOP_3$. 
For condition (1), 
$\vp(t_0;t_1,t_2), \psi(t_0;t_1,t_2)$ means that 
$(x_0, y_0, z_0) <_\ell (x_1, y_1, z_1) <_\ell (x_2, y_2, z_2) <_\ell (x_0, y_0, z_0)$. 
Then $x_i R y_j$, $y_j R z_k$, $z_k R x_i$ which gives a triangle, contradiction.  

It is straightforward to satisfy (2) by compactness (e.g. by choosing $S$ codense in a larger indiscernible sequence).  

Finally, for condition (3), suppose $i<j$ but $\vp(t;\gamma_i), \psi(t;\gamma_j)$ is consistent, where $t=(x,y,z)$.
This means that $(x,y,z) <_\ell (a_i, b_i, c_i) <_\ell (a_j, b_j, c_j) <_\ell (x,y,z)$ (where the middle $<_\ell$
comes from the behavior of $<_\ell$ on the sequence $S$). 
As in condition (1), this gives a triangle, contradiction. 
\end{proof}

We can extend this idea to a much larger engine for producing the rigidity of $SOP_3$ from a forbidden configuration. 

\begin{theorem} \label{independence-at-n}
Suppose that for some $2 \leq n < \omega$, the formula $R$ of $T$ is $I^{n+1}_n$ but not $I^{n+1}_{n+1}$. 
Then $T$ is $SOP_{3}$. 
\end{theorem}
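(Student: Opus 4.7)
The plan is to adapt Example \ref{expl-rg} to this general setting. The scheme has three components: extract a concrete forbidden configuration from $\neg I^{n+1}_{n+1}$, use $I^{n+1}_n$ to build an ordered sequence of $(n+1)$-tuples, and encode $SOP_3$ via a cyclic $R$-shift on those tuples.

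First, by compactness applied to $\neg I^{n+1}_{n+1}$, there is a finite witness to the failure of $A_0$'s independence over $A_1\cup\cdots\cup A_n$; by $I^{n+1}_n$ this witness must meet every $A_i$ with $i\geq 1$, and by Ramsey inside each $A_i$ (passing to indiscernible sub-sequences) it may be taken of the minimal form: elements $b^i\in A_i$ and signs $\epsilon^i\in\{0,1\}$ such that no $a\in A_0$ satisfies $R(a,b^i)^{\epsilon^i}$ for all $i=1,\dots,n$, while dropping any one index still leaves a realizable pattern. After absorbing the signs into the formula we may assume $\epsilon^i\equiv 1$, so the forbidden pattern is an ``$(n+1)$-wise $R$-star'' centered at $A_0$.

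Second, inductively construct $\omega$-many tuples $T_\alpha=(a^0_\alpha,\dots,a^n_\alpha)$ with $a^j_\alpha\in A_j$, so that between $T_\alpha$ and $T_\beta$ for $\alpha<\beta$ the cyclic-shift edge pattern $R(a^j_\alpha,a^{j+1\bmod(n+1)}_\beta)$ holds for all $j$. Each stage of the induction places at most two $R$-constraints on each new entry (one ``incoming,'' one ``outgoing''), which can be realized one column at a time so that each individual choice only sees constraints over $n-1$ other sets, well within the independence given by $I^{n+1}_n$ (compare the staged construction in Observation \ref{tfrg}). Define $T<_\ell T'\iff \bigwedge_{j=0}^n R(a^j_T,a^{j+1\bmod(n+1)}_{T'})$; by construction $T_\alpha<_\ell T_\beta$ iff $\alpha<\beta$. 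Following the example, set $\vp(t_0;t_1,t_2):=t_1<_\ell t_2<_\ell t_0$ and $\psi(t_0;t_1,t_2):=t_0<_\ell t_1<_\ell t_2$, and take the indiscernible sequence required by Definition \ref{sop3-alt} to be pairs of $T_\alpha$'s chosen codensely in a larger indiscernible enumeration. Condition (2) of Definition \ref{sop3-alt} is immediate by compactness and the codense choice; conditions (1) and (3) both reduce to closing a short $<_\ell$-cycle (length $3$ for (1), length $5$ for (3) using the extra order-link between $\bar a_i$ and $\bar a_j$) and chasing the cyclic-shift edges around that cycle to reproduce the forbidden star of the first step.

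The main obstacle is this final chase. For $n=2$ the three columns visited by a 3-cycle exactly exhaust the three color classes and produce the forbidden triangle neatly, as in the example; but for $n\geq 3$ the diagonal of a 3-cycle of $<_\ell$ visits only three colors while the forbidden configuration lives across $n+1$. The natural workaround is to instead produce $SOP_{n+1}$ using an $(n+1)$-cycle of $<_\ell$, whose diagonal does traverse all $n+1$ colors, and then invoke Theorem \ref{sopn-theorems} to deduce $SOP_3$ from $SOP_{n+1}$ (legitimate since $n\geq 2$). Executing this requires ensuring that the $(n+1)$-wise configuration exhibited on the diagonal is genuinely inconsistent with $I^{n+1}_n$, i.e.\ that it aligns with the extracted star rather than a realizable pattern such as a pure cycle; this may demand a further symmetrization of the forbidden pattern (via Ramsey on the sub-sequences of each $A_i$) or a richer choice of $<_\ell$ that embeds the star inside each $(n+1)$-cycle. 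Getting this combinatorial matching right, without breaking the inductive constructibility of the sequence under $I^{n+1}_n$, is where the remaining work concentrates.
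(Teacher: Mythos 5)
Your plan correctly identifies the shape of the argument (extract a finite forbidden configuration from $\neg I^{n+1}_{n+1}$, use $I^{n+1}_n$ to build an ordered array, encode rigidity in a relation $<_\ell$) and, to your credit, you correctly locate the core obstacle: a naive cyclic shift with the two-parameter formulas $\vp(t_0;t_1,t_2)$, $\psi(t_0;t_1,t_2)$ only traverses three of the $n+1$ ``colors,'' so the resulting $<_\ell$-cycle does not reproduce the full $(n+1)$-partite forbidden configuration when $n\geq 3$. However, the proposal stops at identifying this obstacle and offers a speculative workaround (derive $SOP_{n+1}$ instead, or ``symmetrize'' the forbidden pattern, or ``enrich'' $<_\ell$) without actually carrying it out. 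As you yourself acknowledge in the last sentence, the combinatorial matching is unresolved; the proof is incomplete precisely at the step that makes the theorem nontrivial.

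The paper resolves the issue along the lines you gesture at, but the resolution has several features your sketch misses. First, there is no reduction to a ``minimal star'' of height one per column; the forbidden configuration $G$ extracted by compactness has some arbitrary finite height $h$, and the construction must carry this $h$ around throughout (your Ramsey-based reduction to a single witness per column is not justified and need not be available). Second, because of this, the basic unit on which $<_\ell$ is defined is not a single row $(a^0,\dots,a^n)$ but a \emph{block} of $h$ consecutive rows; each relevant object is an $h(n+1)$-tuple. Third, and most importantly, the relation $<_\ell$ is not built from a one-step cyclic shift. It is built from the operator $\cols(i) = \{j : j\neq i,\ i+1 \pmod{n+1}\}$, which has exactly $n-1$ elements, so that choosing a new entry $a^\rho_k$ imposes an $R$-pattern over $n-1$ prior columns --- exactly the depth available from $I^{n+1}_n$. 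This is what allows the array to encode enough of the structure of $G$: your cyclic-shift construction only constrains each element against \emph{one} earlier column and therefore cannot reassemble the full $(n+1)$-partite configuration. Fourth, the $SOP_3$ formulas $\vp_r(x;y_1,\dots y_n)$ and $\psi_\ell(x;y_1,\dots y_n)$ take $n$ block parameters, not two; the contradiction in condition (1) and the inconsistency in condition (3) of Definition \ref{sop3-alt} are then seen to produce a pseudo-$(n+1)$-loop, and the $\cols$-based definition of $<_\ell$ ensures that every pair of columns falls into each other's scope somewhere along the loop, embedding a genuine copy of $G$. Note that this yields $SOP_3$ directly, so there is no need for a detour through $SOP_{n+1}$ and Theorem \ref{sopn-theorems}.
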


\begin{proof}  The construction is arranged into four stages.

\vspace{3mm}
\noindent\emph{Step 1: Finding a universally forbidden configuration $G$.}

\br
By hypothesis, $R$ is not $I^{n+1}_{n+1}$. This means that the infinitary type $p(X_0, \dots X_n)$,
which describes $n+1$ infinite sets $X_i$ which are $I^{n+1}_{n+1}$ in the sense of Definition \ref{constellations},
is not consistent.
Let $G$ be a finite inconsistent subset of height $h$, in the variables $V_G = \{ x^i_j : 1\leq i \leq h, 0\leq j\leq n \}$, 
and described by the edge map $E_G: \{ ((i,j),(i^\prime, j^\prime)) :  i,i^\prime \leq h, j \neq j^\prime \leq n \} \rightarrow \{ 0,1 \}$.
As the inconsistency of $p$ is a consequence of $T$, $G$ will be a universally forbidden configuration:

\begin{equation}
T \vdash \neg(\exists x^1_0,\dots x^h_n) \left( \bigwedge_{i,i^\prime \leq h, ~j \neq j^\prime \leq n} 
R(x^i_j, x^{i^\prime}_{j^\prime}) \iff E((i,j), ({i^\prime},{j^\prime}))=1 \right)
\end{equation}

\noindent Note that the configuration remains agnostic on edges between elements in the same column, in keeping with the
definition of $I^m_\ell$.

In what follows $G$ will appear as a template which we shall try to approximate using $I^{n+1}_n$. Here are the vertices
of $G$ arranged as they will be visually referenced (the edges are not drawn in):

\ssp
\begin{figure}[ht]
$\begin{array}{lllll}
x^h_0 & 	 & x^h_k & 	 & x^h_{n} \\
\vdots &   & \vdots   &      & \vdots  \\
x^\rho_0 &   & x^\rho_k  &      & x^\rho_{n}  \\
\vdots &   & \vdots   &      & \vdots  \\
x^1_0 & \dots	 & x^1_k & \dots		& x^1_{n}
\end{array}$

\caption{Vertices of the forbidden configuration $G$, arranged in columns. When comparing this configuration
to an array whose rows are indexed modulo $h$, the superscript of the top column becomes $0$.}
\end{figure}

\noindent\emph{Step 2: Building an array $A$ of approximations to $G$.}
\br

Let $A_0, \dots A_{n}$ be disjoint infinite sets witnessing $I^{n+1}_n$ for $R$. 
As in Example \ref{expl-rg}, we will use elements from these columns $A_i$ to build an array
$A = \langle a^\rho_i : 1 \leq \rho < \omega, 0 \leq i \leq n \rangle$. Fixing notation,

\begin{itemize}
\item $a^\rho_0,\dots a^\rho_n$ is called a \emph{row}. 
\item $\cols(i) = \{ j : j \neq i, i+1~ (\operatorname{mod} n+1) \}$ is the set of column indices
associated to the column index $i$.  
\item Define an ordering on pairs of indices ($\beta$ for ``before''): 
\begin{align*}
\beta((t^\prime, i^\prime),(t, i)) & \iff_{def} \\
\left( \vrt ( t^\prime < t \land i^\prime \in \cols(i) ) \right. & \lor \left. ( t^\prime = t \land i^\prime < i ) \vrt \right) \\
\end{align*}
\end{itemize}

\begin{claim} \label{array-claim} We may build the array $A$ to satisfy:
\begin{enumerate}
\item For all $\rho$, $a^\rho_k \in A_k$.
\item For any $\rho^\prime, \rho, k, k^\prime$ such that $\beta((\rho^\prime, k^\prime), (\rho, k))$,

\[  a^\rho_k ~R~ a^{\rho^\prime}_{k^\prime}   ~ \iff ~  E_G((r,k),({r^\prime},{k^\prime})) = 1       \]

\vspace{2mm}
\noindent where $r \equiv \rho$ (mod $h$), $r^\prime \equiv \rho^\prime$ (mod $h$). 
\end{enumerate}
\end{claim}

\begin{proof}
We choose elements in a helix ($a^1_0, a^1_1, \dots a^1_n, a^2_0, a^2_1, \dots$) so that
$\beta((\rho^\prime, k^\prime), (\rho, k))$ implies that $a^{\rho^\prime}_{k^\prime}$ is chosen before $a^\rho_k$.

When the time comes to choose $a^\rho_k$, we look for an element of $A_k$ which satisfies Condition (2) of the Claim, that is,
which, by Condition (1), realizes a given $R$-type over disjoint finite subsets of the columns $A_i$ ($i \in \cols(k)$). As $(A_0, \dots A_n)$
was chosen to be $I^{n+1}_n$ and $|\cols{k} | = n-1$, an appropriate $a^\rho_k$ exists.   
\end{proof}

\ssp
\begin{figure} 
\label{array-A}
\begin{small}
$\begin{array}{llllcllc}
& & &  & \vdots & &  &  \\
%& & & & && & \\
& & &  & \vdots & &  &  \\
& & & & & & & \\
&---&---&---&---&---&---&---\\

		&	a^{{\ell_n}h+h}_0 		& 		&\dots 	&   &{a^{{\ell_n}h+h}_k} 	& 	 	& \mathbf{a^{{\ell_n}h+h}_n} \\
		&	\vdots 		&  		& 			& 	& \vdots   				&     & \vdots  \\
B_{\ell_{n}} = \hspace{10mm} 	&	a^{{\ell_n}h+\rho}_0 	&  		& 			&  	&{a^{{\ell_n}h+\rho}_k}  &     & \mathbf{a^{{\ell_n}h+\rho}_{n}}  \\
		&	\vdots 		&  		& 			&  	&\vdots   				&     & \vdots  \\  
		&	a^{{\ell_n}h+1}_0 		& 		&	 			& 	& {a^{{\ell_n}h+1}_k} 		& 		& \mathbf{a^{{\ell_n}h+1}_n} \\

&---&---&---&---&---&---&---\\

& & & & && & \\
& & &  & \vdots & &  &  \\
& & & & & & & \\

&---&---&---&---&---&---&---\\

&a^{{\ell_k}h+h}_0 			& 	&\dots 	& & \mathbf{a^{{{\ell_k}h+h}}_k} 		& 	 			& a^{{\ell_k}h+h}_{n} \\
&\vdots 			&  	& 			& & \vdots   									&      		& \vdots  \\
B_{\ell_k}= &a^{{\ell_k}h+\rho}_0 		&  	& 			& & \mathbf{a^{{\ell_k}h+\rho}_k}  	&      		& a^{{\ell_k}h+\rho}_{n}  \\
&\vdots 			&  	& 			& & \vdots   									&      		& \vdots  \\
&a^{{\ell_k}h+1}_0 			& 	&\dots	& & \mathbf{a^{{\ell_k}h+1}_k} 			& \dots		& a^{{\ell_k}h+1}_{n} \\

&---&---&---&---&---&---&---\\

& & & & && & \\
& & &  & \vdots & &  &  \\
& & & & & & & \\

&---&---&---&---&---&---&---\\

&a^{2h}_0 		& \mathbf{a^{2h}_1}			& 		&	 	& a^{2h}_k 		& 	 		& a^{2h}_{n} \\
&\vdots 		& \vdots  							&   	&		& \vdots				&    		& \vdots  \\
B_{1} = &a^{h+\rho}_0 	&  \mathbf{a^{h+\rho}_1}& 		&  	& a^{h+\rho}_k  &      	& a^{h+\rho}_{n}  \\
&\vdots 		& \vdots   							&    	&		& \vdots  				&  			& \vdots  \\
&a^{h+1}_0 		& \mathbf{a^{h+1}_1}		& 		&		& a^{h+1}_k 		& \dots	& a^{h+1}_{n} \\

&---&---&---&---&---&---&---\\

&\mathbf{a^h_0} 							& 	&	&			 	& a^h_k 		& 	 		& a^h_{n} \\
&\mbox{\boldmath{$\vdots$}} 	&  	&	& 			& \vdots   	&      	& \vdots  \\
B_{0} = &\mathbf{a^\rho_0} 					&  	&	& 			& a^\rho_k  &      	& a^\rho_{n}  \\
&\mbox{\boldmath{$\vdots$}} 	&  	&	& 			& \vdots   	&    	  & \vdots  \\
&\mathbf{a^1_0} 							& 	&	&\dots	& a^1_k 		& \dots	& a^1_{n}
\end{array}$
\end{small}

\caption{Elements of the array $A$, arranged in blocks of $h$ rows. 
The boldface refers to Step 4 of the proof, when a proposed witness to $G$ is assembled
from the $i$th columns of blocks $B_i$ in a pseudo-$(n+1)$-loop.}
\end{figure}
\dsp

\br

\noindent\emph{Step 3: Defining the relation $<_\ell$, which has no pseudo-$(n+1)$-loops.}

\br
We now define a binary relation $<_\ell$ on $m$-tuples, where $m = h(n+1)$. Fix the enumeration
of these tuples to agree with the natural interpretation as blocks $B_\ell$ of $h$ consecutive rows
in the array $A$ (see Figure \ref{array-A}). That is, 
write the variables $Y := \langle y^t_i : 1 \leq t \leq h, 0 \leq i \leq n \rangle$, 
$Z := \langle z^{t^\prime}_{i^\prime} : 1 \leq t^\prime \leq h, 0 \leq i^\prime \leq n \rangle$. Define:  

\br
\begin{align*}
\mathbf{Y <_\ell Z} & ~{\iff_{(def)}}~~\\ 
\bigwedge_{1 \leq t^\prime, t \leq h, ~ 0\leq i, i^\prime \leq n} & \left(i^\prime \in \cols(i)\right) \implies \left( z^t_i ~R~ y^{t^\prime}_{i^\prime}  ~\iff ~ E_G((t,i),({t^\prime},{i^\prime})) = 1  \right) \\
\end{align*}

\br 

Let $B$ be a partition of the array $A$ into blocks $B_k$ ($k<\omega$) each consisting of $h$ consecutive rows,
so $B_k := \langle a^r_t : 0 \leq t \leq n, kh+1 \leq r \leq (kh)+h \rangle$, for each $k<\omega$ (see Figure \ref{array-A}).  
By Claim \ref{array-claim}, $i \lneq j \implies B_i <_\ell B_j$. 

\begin{defn}
A \emph{pseudo-$(n+1)$-loop} is a sequence $W_i$ $(0 \leq i \leq n)$ such that for some $m$, $1 \leq m < n$:
\begin{equation} \label{loop}
\left( \bigwedge_{(0 < j < i \leq n)} W_j <_\ell W_i \right)  ~\land~ \left( \bigwedge_{1 \leq j \leq m} W_0 <_\ell W_j \right)
\land \left( \bigwedge_{m < j \leq n} W_j <_\ell W_0 \right)
\end{equation}

\end{defn}

Suppose it were consistent with $T$ to have blocks of variables $W_0 \dots W_n$ which form a pseudo-$(n+1)$-loop.
Write $W_k(i) = \{ w^{hk+1}_i, \dots w^{hk+h}_i \}$ for the $i$th column of block $W_k$. Figure \ref{array-A} gives the picture,
where the elements $a$ are replaced by variables $w$ and the blocks $B_i$ become $W_i$.
Set $W_G = W_0(0) \cup \dots \cup W_n(n)$ ~(which can be visualized as the boldface columns in Figure \ref{array-A}). 

By definition of $<_\ell$, the pseudo-(n+1)-loop (\ref{loop}) implies that whenever
\[ \left( (~j \in \cols(i)) \land \left((0 < j < i \leq n) \lor (j=0 \land i \leq m) \lor (m < j \land i=0) \right) \right) \]
we will have:
\begin{align*}
\left(\forall~ w^t_k \in W(i),~ w^{t^\prime}_{k^\prime}\in W(j) \right) & \left( w^t_k~R~w^{t^\prime}_{k^\prime} \iff E_G((t,k),({t^\prime},{k^\prime})) = 1 \right) \\
\end{align*}

In other words, $<_\ell$ says that on certain pairs of elements in our proposed instance $W_G$ of $G$, namely those elements whose respective columns 
``fall into each other's scope'' as given by the $\cols$ operator, 
$W_G$ faithfully follows the template of $G$. 
It is easy to check that in a pseudo-$(n+1)$-loop every pair $j \neq i$ in $\{0, \dots n \}$ has this property. 
Thus pseudo-$(n+1)$-loops in $<_\ell$ are inconsistent with $T$.  

\br
\noindent\emph{Step 4: Obtaining $SOP_3$.}
\br

Step 3 showed that our array $A$ of approximations had a certain rigidity, which we can now identify as $SOP_3$.
Following Definition \ref{sop3-alt}, let us define 
$\vp_r(x;y_1,\dots y_n)$ and $\psi_\ell(x;y_1,\dots y_n)$,
where the the variables are blocks, and the subscripts ``$\ell$'' and ``r'' are visual aids: the element $x$ goes
to the left of the elements $y_i$ under $\psi$, and to their right under $\vp$. 

That is, we set:

\begin{itemize}
\item $\vp_r(x;y_1,\dots y_n) =$
\[ \bigwedge_{1 \leq i\neq j \leq n} y_i <_\ell y_j   ~ \land ~ \bigwedge_{1 \leq i \leq n} y_i <_\ell x \]
\item $\psi_\ell(x;y_1,\dots y_n) =$
\[ \bigwedge_{1 \leq i \leq n} x <_\ell y_i ~ \land ~ \bigwedge_{1 \leq i\neq j \leq n} y_i <_\ell y_j \]
\end{itemize} 

Now let us verify that the conditions of Definition \ref{sop3-alt} hold. Let $B$ be the sequence of blocks defined in Step 3, and
assume without loss of generality that $B = \langle B_k  : k <\omega \rangle$ is indiscernible and 
moreover is dense and codense in some indiscernible sequence $B^\prime$. Let $A = \langle A_i : i<\omega \rangle$
be an indiscernible sequence of $n$-tuples of elements of $B$. 

\begin{enumerate}
\item $\{ \vp_r(x;y_1,\dots y_n), \psi_\ell(x;y_1,\dots y_n) \}$ is contradictory because it gives rise to a 
pseudo-$(n+1)$-loop. 
\item By construction, for any $k < \omega$, the type
\[ \{ \psi_\ell(x;A_j) : j \leq k \} \cup \{ \vp_r(x;A_i) : k < i  \} \] is consistent, because $<_\ell$ linearly orders $B$,
thus also $B^\prime$. 
Choose the desired sequence of witnesses to be elements in the indiscernible sequence $B^\prime$ which are interleaved with $B$. 
\item Suppose we have $\{ \vp_r(x;A_j), \psi_\ell(x;A_i) \}$ for some $i<j$, or in other words:
\[ \{ \vp_r(x;B_{j_1},\dots B_{j_n}), \psi_\ell(x;B_{i_1},\dots B_{i_n})  \}  ~~ \mbox{where}~~ \{ i_1, \dots i_n\} < \{ j_1,\dots j_n \} \]
Then $x <_\ell B_{i_1} <_\ell \dots <_\ell B_{i_n} <_\ell B_{j_1} <_\ell \dots <_\ell B_{j_n} <_\ell x$ is a pseudo-$(2n+1)$-loop
(remember that $<_\ell$ holds between any increasing pair of elements of $B$ by construction). 
Thus a fortiori we have a pseudo-$(n+1)$-loop, contradicting the conclusion of Step 3.
\end{enumerate}

We have shown that the theory $T$ has $SOP_3$, so we finish. 
\end{proof}


\begin{thebibliography}{50}
 
\bibitem{es}
Elek and Szegedy, ``Limits of Hypergraphs, Removal and Regularity Lemmas.  A Non-standard Approach,'' (2007) arXiv:0705.2179. 

\bibitem{gowers}
Gowers, ``Hypergraph Regularity and the multidimensional Szemer\'edi Theorem.'' Ann. of Math. (2)  166  (2007),  no. 3, 897--946.

\bibitem{keisler} %4
Keisler, ``Ultraproducts which are not saturated.'' Journal of Symbolic Logic, 32 (1967) 23--46.

\bibitem{rsurvey}
Koml\'os and Simonovits, ``Szemer\'edi's Regularity Lemma and its Applications in Graph Theory,''
Combinatorics, Paul Erd\"os is Eighty, vol. 2, Budapest (1996) 295--352.

\bibitem{mm-article-1} %7
Malliaris, ``Realization of $\vp$-types and Keisler's order,'' Annals of Pure and Applied Logic 157 (2009) 220--224.

\bibitem{mm-article-2}
Malliaris, ``Persistence and NIP in the characteristic sequence,'' submitted (2009).

\bibitem{Sh:c} %14
Shelah, \emph{Classification Theory and the number of non-isomorphic models}, revised edition. North-Holland, 1990. %(Chapter VI). 

\bibitem{Sh500}  %12
Shelah, ``Toward classifying unstable theories,'' Annals of Pure and Applied Logic 80 (1996) 229--255. %% paper 500

\bibitem{ShUs}
Shelah and Usvyatsov, ``More on ${\rm SOP}_1$ and ${\rm SOP}_2$,'' Annals of Pure and Applied Logic
155  (2008),  no. 1, 16--31.

\bibitem{sz}  %17
Szemer\'edi, ``On Sets of Integers Containing No $k$ Elements in Arithmetic Progression.'' Acta Arith. 27, 199-245, 1975a.

\bibitem{Usv}
Usvyatsov, ``On generically stable types in dependent theories.'' Journal of Symbolic Logic 74 (2009), 216-250.
\end{thebibliography}
\end{document}